\pgfplotsset{compat=1.14}
\newsavebox\tmpbox
\newcommand{\dual}{'}
\newcommand{\half}{{1/2}}
\newcommand{\reals}{{\mathbb{R}}}
\newcommand{\semi}[1]{\lvert#1\rvert}
\newcommand{\eps}{\varepsilon}
\newcommand{\pelm}[1]{\text{P}\textsubscript{#1}}
\newcommand{\jump}[1]{\ensuremath{[\![#1]\!]} }
\newcommand{\avg}[1]{\ensuremath{\left\{\!\left\{#1\right\}\!\right\}} }
\newcommand{\uf}{\ensuremath{\mathbf{u}_f}}
\newcommand{\up}{\ensuremath{\mathbf{u}_p}}
\newcommand{\ff}{\ensuremath{\mathbf{f}_f}}
\newcommand{\fp}{\ensuremath{f_p}}
\newcommand{\hf}{\ensuremath{\mathbf{h}_f}}
\newcommand{\hp}{\ensuremath{h_p}}
\newcommand{\nablab}{\nabla}
\newcommand{\vf}{\ensuremath{\mathbf{v}_f}}
\newcommand{\vp}{\ensuremath{\mathbf{v}_p}}
\newcommand{\nG}{\ensuremath{\mathbf{n}}}
\newcommand{\tG}{\ensuremath{{T}_t}}
\newcommand{\la}[1]{\ensuremath{\mathbf{#1}}}
\newcommand{\BJS}{\ensuremath{\alpha_{\text{BJS}}}}
\newcommand{\Epsilon}{\boldsymbol{\epsilon}}
\newcommand{\bsigma}{\boldsymbol{\sigma}}
\newcommand{\Pelm}[1]{\textit{P}\textsubscript{#1}}
\newcommand{\RTelm}[1]{\textit{\textbf{RT}}\textsubscript{#1}}
\newcommand{\DSelm}{\textit{\textbf{P}}\textsubscript{2}-\Pelm{1}-\RTelm{0}-\Pelm{0}-\Pelm{0}}
\newcommand{\SNelm}{\textit{\textbf{P}}\textsubscript{2}-\Pelm{1}-\textit{\textbf{P}}\textsubscript{2}-\Pelm{1}-\textit{\textbf{P}}\textsubscript{0}}
\newcommand{\Selm}{\textit{\textbf{P}}\textsubscript{2}-\Pelm{1}-\Pelm{0}}
\newcommand{\vecSelm}{\textit{\textbf{P}}\textsubscript{2}-\Pelm{1}-\textit{\textbf{P}}\textsubscript{0}}
\newcommand{\Delm}{\RTelm{0}-\Pelm{0}-\Pelm{0}}
\newtheorem{remark}{Remark}[section]
\newtheorem{assumption}{Assumption}[section]
\newtheorem{example}{Example}[section]
\renewcommand{\AA}{\mathcal{A}}
\newcommand{\BB}{\mathcal{B}}
\newcommand{\set}[1]{\{#1\}}
\newcommand{\Rsz}[1]{\ensuremath{R_{#1}}}
\newcommand{\Rszi}[1]{\ensuremath{R^{-1}_{#1}}}
\newcommand{\Hdiv}[1]{\ensuremath{\mathbf{H}(\operatorname{div},\,#1)}}
\newcommand{\HdivD}[2]{\ensuremath{\mathbf{H}_{0, #1}(\operatorname{div},\,#2)}}
\newcommand{\ismuL}{{\tfrac 1 {\sqrt{\mu}}L^2(\Omega_f)}}
\newcommand{\smuH}{{\sqrt{\mu}\textbf{H}^1_{0, D}(\Omega_f) \cap \sqrt{D}\textbf{L}^2_{t}(\Gamma)}}
\newcommand{\isKHd}{\tfrac 1 {\sqrt{K}} \HdivD{D}{\Omega_p}}
\newcommand{\sKL}{{\sqrt{K}} L^2(\Omega_p)}
\newcommand{\DSmultf}{\tfrac 1 {\sqrt{\mu}} H^{-1/2}(\Gamma)}
\newcommand{\DSmultpD}{\sqrt{K} H_{00}^{1/2} (\Gamma)}
\title{Robust preconditioning of monolithically coupled multiphysics problems
}
\author{Karl Erik Holter \thanks{
    Simula Research Laboratory, Fornebu, Norway (\email{karl0erik@gmail.com}, \email{miroslav@simula.no}).\funding{Karl Erik Holter is funded by the Simula-UCSD-University of Oslo Research and PhD training (SUURPh) program, an international collaboration in computational biology and medicine funded by the Norwegian Ministry of Education and Research.
}}
  \and Miroslav Kuchta\footnotemark[2]
\and Kent-Andre Mardal\footnotemark[2]$^{\;,}$\thanks{
  Department of Mathematics, University of Oslo, Norway (\email{kent-and@simula.no}).
  }}
\DeclareMathOperator{\diag}{diag}
\begin{document}

\maketitle 
\begin{abstract}
{In many applications, one wants to model physical systems consisting of two 
different physical processes in two different domains that are coupled 
across a common interface. A crucial challenge is then that the solutions of the two 
different domains often depend critically on the interaction at the interface
and therefore the problem cannot be easily decoupled into its subproblems.
Here, we present a framework for finding
robust preconditioners for a fairly general class of such problems
by exploiting operators representing fractional and weighted Laplacians at the interface. 
Furthermore, we show feasibility of the framework for two common multiphysics problems; namely 
the Darcy-Stokes problem and a fluid--structure interaction problem. Numerical 
experiments that demonstrate the effectiveness of the approach are included.
}
\end{abstract}

\begin{keywords}
multiphysics problem, multiscale problem, Lagrange multipliers
\end{keywords}

\begin{AMS}
65F08, 65F10, 65M60, 65N55
\end{AMS}

\section{Introduction}\label{sec:intro}
This paper is concerned with preconditioning of monolithic schemes for multiphysics 
problems where two single-physics problems are coupled at a common interface. 
We will employ operator preconditioning and fractional and weighted Sobolev spaces
in order to establish preconditioners that are parameter robust and order
optimal with respect to the resolution of the mesh. 
{Two multiphysics problems will be considered: a Darcy-Stokes problem coupling viscous flow
  to porous media flow and a fluid--structure interaction (FSI) problem involving viscous fluid flow
  and small linear deformations of the solid.}

We shall illustrate the concepts using the Darcy-Stokes problem. Let $\Omega_f$ and $\Omega_p$ be the domain of the viscous flow
and the porous medium, respectively, and $\Gamma=\partial\Omega_f\cap\partial\Omega_p$ be
their common non-empty interface. Further let the subdomains' boundaries be decomposed as $\partial \Omega_i = \Gamma \cup \partial \Omega_{i,D} \cup \partial \Omega_{i, N}$,
$i=f, p$, see Figure \ref{fig:DSdomains} for illustration. Here the subscripts $D, N$ signify respectively that Dirichlet and Neumann boundary conditions are prescribed on the part of the boundary. Our interest concerns applications where boundary conditions typically 
include both Dirichlet and Neumann conditions. We will therefore pay special attention to the boundary conditions and consider
cases in which, to the authors' knowledge, the well-posedness of the subproblems has not been established theoretically.  
In these cases we will pose assumptions on the subproblems which imply well-posedness of the coupled problem and include numerical experiments
that support these assumptions.

The 2D coupled Darcy-Stokes problem reads\\
\begin{minipage}{0.52\textwidth}
  \begin{subequations}\label{eq:darcy_stokes_strong}
    \begin{align}
      \label{eq:darcystokes1} 
      -\mu \Delta \uf + \nabla p_f &= \ff  &\text{ in } \Omega_f,\\
      \label{eq:darcystokes2}
      \nabla \cdot \uf &= 0 &\text{ in } \Omega_f,  \\
      \label{eq:darcystokes3} 
      K^{-1} \up + \nabla p_p &= 0  &\text{ in } \Omega_p,\\
      \label{eq:darcystokes4}
      \nabla \cdot \up &= \fp &\text{ in } \Omega_p,  \\
      \label{eq:darcystokesmass} 
      \up \cdot \nG - \uf \cdot \nG &= 0 &\text{ on } \Gamma, \\
      \label{eq:darcystokesstress}
      - \mu \frac {\partial \uf} {\partial \nG} \cdot \nG + p_f &= p_p  &\text{ on } \Gamma ,\\
      \label{eq:darcystokesBJS}
  - \mu \frac {\partial \uf} {\partial \nG} \cdot \boldsymbol{\tau} - D \uf \cdot \boldsymbol{\tau} &= 0  &\text{ on } \Gamma.
\end{align}
\end{subequations}
\null
\par\xdef\tpd{\the\prevdepth}
\end{minipage}
\begin{minipage}{0.47\textwidth}
  \begin{figure}[H]
    \centering
    \includegraphics[width=\textwidth]{img/inline_tex_DSdomain.tex}
    \vspace{-25pt}
    \caption{Schematic domain of Darcy-Stokes problem.
    }
    \label{fig:DSdomains}
\end{figure}
\end{minipage}


\vspace{0.2cm}
Here, $\uf, p_f$ are the unknown velocity and pressure for the Stokes problem \eqref{eq:darcystokes1}-\eqref{eq:darcystokes2}
in $\Omega_f$ and $\up, p_p$ are the unknown velocity and pressure of the Darcy problem \eqref{eq:darcystokes3}-\eqref{eq:darcystokes4}
in $\Omega_p$. We remark that below we will change the sign of the pressures in order to get a symmetric problem. 
The (constant) material parameters are the fluid viscosity $\mu$, the hydraulic conductivity $K$ and
$D = \alpha_{\text{BJS}}\sqrt{\frac{\mu }{K}}$ with $\alpha_{\text{BJS}}$ the Beavers-Joseph-Saffman (BJS) coefficient. Finally,
$\nG$ is the unit outer normal of the subdomains (on $\Gamma$ the normal is oriented with respect
to $\Omega_f$) and $\boldsymbol{\tau}$ is a unit vector tangent to the interface. At the interface
$\Gamma$ the conditions
\eqref{eq:darcystokesmass}--\eqref{eq:darcystokesBJS} are respectively conservation of mass,
balance of normal stress and the BJS condition \cite{mikelic2000interface}. We further assume that the
problem is equipped with the following boundary conditions
\[
\uf = \uf^0\mbox{ on }\partial\Omega_{f, D},\quad
\up\cdot\boldsymbol{n} = u^0_p\mbox{ on }\partial\Omega_{p, D}
\]
and
\[
\mu\frac{\partial\uf}{\partial\nG} - p\nG = \hf \mbox{ on }\partial\Omega_{f, N},\quad
p_p = \hp \mbox{ on }\partial\Omega_{p, N}.
\]

The well-posedness of the coupled problem \eqref{eq:darcystokes1}-\eqref{eq:darcystokesBJS} is well-known
in the case of Dirichlet conditions at the boundary, c.f.~\cite{layton2002coupling, galvis2007non}. 
Our work here is related to~\cite{galvis2007non} where error estimates that were 
robust with respect to variations in the material parameters were obtained 
in formulations using a Lagrange multiplier at the interface. 
Robust preconditioners were, however, not discussed in either~\cite{layton2002coupling, galvis2007non}.
Still, 
the precise tracking of parameters  in the norms in~\cite{galvis2007non}
serves as an excellent starting point for deriving robust preconditioning.  
Here, we will show that the norms of the fluid velocity and pressure 
in both the viscous and porous domains can be derived from their analysis, but 
that there are important differences for the Lagrange multiplier at the interface.

Our main motivation for the current study of multiphysics systems are the viscous-porous-elastic coupled problems in a biomechanical
setting. Here, the material properties do not vary significantly themselves, e.g.  
the viscosity of blood is typically around 3\mbox{mPa$\cdot$s}, while 
water has viscosity around 0.7\mbox{mPa$\cdot$s}, which is also a good approximation for cerebrospinal fluid, plasma and extracellular fluid.   
Furthermore, the permeability in tissue is typically in the order of 
$10^{-15}\mbox{m}^2$ to $10^{-18}\mbox{m}^2$~\cite{holter2017interstitial,koch2018multi,sarntinoranont2006computational,smith2007interstitial,stoverud2016poro}.
The physical parameters thus do not vary significantly. However,
the length scales span from $\mbox{dm}$ to $\mu\mbox{m}$ and introduce
variations that require parameter robustness.
For example, permeability alone, which has units of length squared, introduces parameter
changes of order $10^{10}$ in viscous-porous coupling from the macro-circulation level 
at $\mbox{dm}$-scale~\cite{sarntinoranont2006computational,smith2007interstitial, stoverud2016poro}
to the micro-circulation  at the $\mu\mbox{m}$-scale~\cite{holter2017interstitial,koch2018multi}.          


Discretization of coupled multiphysics problems is challenging because 
the subproblems may require different approaches. For example, in \eqref{eq:darcy_stokes_strong} 
the $\la{H}(\text{div})$ conforming elements typically used for the Darcy flux, 
e.g. Raviart-Thomas element, do not provide stable discretization of the Stokes velocity. 
From an implementation point of view, it may be beneficial to employ the
same discretization in both domains, so-called unified approaches, and several strategies have
been proposed~\cite{badia2009unified, burman2007unified,karper2009unified,rui2009unified}. 
Alternatively, in the non-unified approach the  discretizations best suited for the 
subproblems are used. However, then a proper coupling of the schemes across the 
interface presents a challenge. For the coupled Darcy-Stokes problem \eqref{eq:darcy_stokes_strong} 
such stable element pairs are given e.g. in \cite{galvis2007non, gatica2008conforming, layton2002coupling, riviere2005locally}. 
Here, we shall further use the discretization proposed by \cite{galvis2007non}.

The solution approaches for coupled multiphysics problems can in general
be divided into monolithic solvers (where all the problem unknowns are solved
for at once) or domain-decomposition (DD) solvers (where one iteratates between
the sub-problems). For Darcy-Stokes problem these have been applied both
to the mixed form \eqref{eq:darcy_stokes_strong} and the primal form, in which 
the Darcy problem is only solved for the pressure and which results in a 
non-symmetric problem, see \cite{discacciati2002mathematical}.
  Monolithic multigrid solvers for the mixed formulation have been proposed in
  \cite{luo2017uzawa}, while balancing domain decomposition
preconditioner and the mortar formulation suitable for DD preconditioning
are discussed in \cite{galvis2007balancing} and \cite{girault2014mortar}
respectively. Concerning the (non-symmetric) primal formulation,
\cite{cai2009preconditioning, chidyagwai2016constraint} studied monolithic
solvers based on preconditioned GMRES. Domain decomposition algorithms based
on Dirichlet-Neumann, or Robin-Robin coupling are then discussed 
in \cite{discacciati2004convergence} or e.g. \cite{discacciati2007robin, chen2011parallel}.
Multigrid approaches were proposed in \cite{mu2007two, cai2012multilevel}.
We remark that of the cited works only \cite{luo2017uzawa, discacciati2007robin}
present algorithms which are robust in discretization and material parameters.

Multigrid preconditioners for the fluid-structure interaction problem solved
with GMRES are discussed e.g. in \cite{hron2006monolithic} (Vanka smoother) or
\cite{gee2011truly} (using Gauss-Seidel). Different block preconditioners
for GMRES are then discussed in \cite{heil2004efficient} while \cite{badia2009robin}
derive preconditioner based on DD and Robin-Robin coupling.
Domain decomposition solvers for the FSI based on interaction between the fluid
and the solid via Lagrange multipliers are proposed in \cite{gerstenberger2008extended}.
Finally, \cite{deparis2006fluid, deparis2006domain, badia2008fluid} derive FSI
solvers considering preconditioned Richardson iterations for the related (interfactial) Steklov-Poincar{\' e}
operators. We remark that the problem to be studied in \S \ref{sec:stokes-elasticity}
shall be viewed as a component of an FSI solver, in particular, we consider
a fixed interface and a linear material.

To the authors' knowledge, order optimal monolithic preconditioning,
devised using the operator preconditioning framework, which is robust
with respect to any variations in material parameters has
not been accomplished for flow problems involving the coupling between viscous and porous flow and 
fluid--structure interaction problems. Our aim here is to 
devise such preconditioners. However, our analysis is restricted to coupled problems
where the dynamics is slow and linear. Hence, the viscous flow problem is in both of the coupled
problems represented by Stokes equations. Furthermore, the 
fluid--structure problem we consider here is  
the coupling of viscous fluid described by Stokes model and linear elastic solid described by
Navier's elasticity equation. We will hence refer to the fluid--structure problem 
as a Stokes-Navier problem to distinguish it from the common Navier-Stokes equations 
of fluid flow as well as fluid--structure problems in general.  
Our goal here is therefore to describe parameter robust preconditioners for
both the Darcy-Stokes and Stokes-Navier problems.   
To this end we shall crucially rely on operators in fractional
Sobolev spaces. More specifically, by considering the monolithic saddle point problem
consisting of both subproblems coupled together with a Lagrange multiplier, we shall
establish a formulation with the Lagrange multiplier in properly weighted fractional spaces such that we  can derive 
parameter-robust stability estimates and corresponding preconditioners.

An outline of the paper is as follows: Section \ref{sec:prelims} describes notation and the
mathematical setting in which we operate. In Section \ref{sec:abstract} we present the framework
for deriving preconditioners for (a class of) coupled multiphysics problems. The
framework is then applied to derive robust preconditioners for the Darcy-Stokes system in Section \ref{sec:darcy-stokes} 
and for the Stokes-Navier system in Section \ref{sec:stokes-elasticity}.

\section{Preliminaries}\label{sec:prelims}
We will use boldface symbols to denote vector fields and spaces of vector fields 
while scalar fields and spaces are written in a normal font. Similar distinction 
will not be made for the operators as their meaning shall always be clear from the 
context.

Let $\Omega$ be a bounded Lipschitz domain in $\reals^n,\, n=2,3$
and $L^2=L^2(\Omega)$ be the Lebesgue space of square integrable functions. Sobolev
spaces with derivatives of order up to $k$ in $L^2$ are denoted
by $H^k$ whereas $H^k_0$ denotes the closure of $C_0^\infty(\Omega)$ in $H^k$.
The Sobolev space of $\la{L}^2$ functions whose divergence is in $L^2$ is denoted $\Hdiv{\Omega}$. 
{Function spaces containing only functions with mean value zero are denoted
as quotient spaces, e.g. $L^2(\Omega)/\reals$ is the space of $L^2$ functions 
on $\Omega$ with mean value zero}.

The dual space of a vector space $X$ is denoted as $X'$. For two normed vector spaces $X, Y$ the space of bounded
linear operators mapping $X$ to $Y$ is denoted $\mathcal{L}(X, Y)$, or just $\mathcal{L}(X)$ if $Y=X$. The
inner product on a space $X$ is denoted $(\cdot, \cdot)_X$. For simplicity, the $L^2$-inner product between scalar, vector and tensor fields in $L^2$ as well
as the duality pairing between a Hilbert space and its dual is denoted by $(\cdot, \cdot)$.
We shall sometimes (in the interest of clarity) indicate the domain in the $L^2$ inner product by a subscript,
e.g. $(\cdot, \cdot)_{\Gamma}$. The dual of an 
operator $B$ with respect to the $L^2$ inner product is denoted by $B\dual$. 
The Riesz mapping of a Hilbert space $V$ is denoted as $\Rsz{V}$ and $\Rsz{V}:V\dual\rightarrow V$.
Its inverse map is denoted as $\Rszi{V}: V \to V\dual$.

If $X, Y$ are Sobolev spaces, and $a$ an arbitrary {positive} real number,
we define the weighted space $aX$ to be the space $X$ with the norm $a\|\cdot\|_X$.
The intersection $X \cap Y$ and sum $X+Y$ are Hilbert spaces with norms 
\begin{eqnarray*}
\|u\|_{X \cap Y} = \sqrt{\|u\|^2_X + \|u\|^2_Y} \quad\quad\mbox{and}\quad\quad
\|u\|_{X + Y} = \inf_{\substack{x + y = u \\x \in X, y\in Y }} \sqrt{\|x\|^2_X + \|y\|^2_Y} .
\end{eqnarray*}


Following \cite{kuchta2016preconditioners}, we define the Sobolev space $H^s(\Omega)$
for a real number $s \in (-1, 1)$  in terms of the spectral decomposition of Laplacian.
This definition is easily implementable and suitable for our purposes, but numerous
alternative definitions exist, whose equivalence to the spectral definition used here
depends on boundary conditions. We will not go into detail here, but refer to \cite{di2012hitchhikers,lischke2018fractional} for an
overview, and to \cite{chandler2015interpolation} for a treatment of our definition in terms
of interpolation spaces. 
Let $S \in \mathcal{L}(H^1(\Omega))$ be the operator such that $(Su, v)_{H^1(\Omega)} = (u, v)_{L^2(\Omega)}$
for all $v \in H^1(\Omega)$, where we remark that we use the full $H^1$ norm. 
We can then find a basis $\{\phi_i\}_i$ of eigenvectors of $S$ for
$H^1(\Omega)$ with eigenvalues $\lambda_i>0$, and for any $u = \sum_i c_i u_i$ define
$$\|u\|_{H^s} = \sqrt{\sum_i c_i^2 \lambda_i^{-s}}.$$ $H^s$ is then the closure of
$\text{span } \{\phi_i\}_i$ in $\|\cdot\|_{H^s}$.

{
For a Lipschitz domain $\Omega$ with $\Gamma \subseteq \partial \Omega$
we define a trace operator $T$ and normal trace operator $T_n$ such that
\[
(Tu)(x) = u(x), x\in\Gamma, \forall u\in C^{\infty}(\overline{\Omega})
\]
and
\[
(T_n\boldsymbol{u})(x) = \boldsymbol{u}(x)\cdot\nG(x), x\in\Gamma, \forall \boldsymbol{u}\in (C^{\infty}(\overline{\Omega}))^d.
\]
The trace operator acting on vector fields is likewise denoted $T$ and is defined
component wise. The tangential trace operator $\tG$ is defined analogously to $T_n$ and we
let $\la{L}_{t}^2(\Gamma)$ be the space of functions $\boldsymbol{u}$ on $\Omega$ such
that $\boldsymbol{u}\cdot\boldsymbol{\tau}\in L^2(\Gamma)$.

Following~\cite{galvis2007non}, for 
$\Gamma$ a subset of $\partial\Omega$, we define $H^{\half}_{00}(\Gamma)$ to be the space of all
$w \in H^{\half}(\Gamma)$ for which the extension by 0 to $\partial\Omega$ is in
$H^{\half}(\partial\Omega)$.  We also define $H^{-\half}_{00}(\Gamma)$ to be the dual of
$H^{\half}_{00}(\Gamma)$, and denote the extension by zero as $E_{00}: H_{00}^{1/2}(\Gamma) \to H^{1/2}(\partial\Omega)$.
With these definitions the trace operators can be extended to surjective and continuous mappings with a
bounded right inverse; 
$T: H^1(\Omega)\rightarrow H^{1/2}(\partial \Omega)$, see \cite[Thm. 3.37]{mclean2000strongly}
and 
$T_n: \Hdiv{\Omega}\rightarrow H^{-1/2}(\partial \Omega)$ 
see \cite[Thm. 2.5]{girault2012finite} and \cite[Cor. 2.8]{girault2012finite}.
}

As the restriction map $\rvert_{\Gamma}: H^{\half}(\partial \Omega) \to H^{\half} (\Gamma)$ is well-defined
for any $\Gamma \subset \partial \Omega$ we can define $T: H^1(\Omega)\rightarrow H^{1/2}(\Gamma)$
by composition. Taking the kernel of this map, we define the space $H^1_{0, \Gamma}(\Omega)$ of $H^1$ functions
whose restriction to $\Gamma$ is zero.
However, as the restriction $\rvert_{\Gamma}$ in $H^{-\half}$  is in general not surjective, we cannot define a
similar restriction $\rvert_{\Gamma}: H^{-\half}(\partial \Omega) \to H^{-\half} (\Gamma)$. To define
the corresponding space $\HdivD{\Gamma}{\Omega}$, we therefore require a notion of what $w\rvert_{\Gamma} = 0$
means for $H^{-\half}(\partial \Omega)$. Following \cite{galvis2007non}, we say that $w\rvert_{\Gamma} = 0$
for $w \in H^{-\half}(\partial \Omega)$ if $(w, E_{00}v) = 0$ for all $v \in H_{00}^{1/2}(\Gamma)$, and
define the space $\HdivD{\Gamma}{\Omega}$ to be the space of all $\mathbf{u} \in \Hdiv{\Omega}$ for
which $\left(T_n\mathbf{u}\right)\rvert_{\Gamma} = 0$.

To avoid proliferation of subscripts, when $\partial \Omega_D \subset \partial \Omega$ and there is no possibility of confusion, we will denote the space $H^1_{0, \partial \Omega_D}(\Omega)$ of $H^1$ functions with homogeneous Dirichlet conditions at $\partial \Omega_D$ by $H^1_{0, D}(\Omega)$. Similarly, the space $\HdivD{\partial \Omega_ D}{\Omega}$ is denoted $\HdivD{D}{\Omega}$. Here $\partial \Omega_D$ may be any non-empty subset of the boundary, including the entire boundary.

\begin{remark} \label{rmk:tracedirichletbdy}
  From Lemma 2.2 of \cite{galvis2007non}, $T_n$ can be viewed as 
  mapping $\Hdiv{\Omega}$ to $H^{-1/2}(\partial \Omega \backslash \Gamma)$,
  defining the term $(T_n \mathbf{u}, w)_{\Gamma}$ for all $\mathbf{u} \in \HdivD{\partial {\Omega} \backslash \Gamma}{\Omega}$,
  $w \in H^{\half}(\Gamma)$. In general, if $\Gamma, \partial \Omega_D \subset \partial \Omega$ are such that $\Gamma \subset \partial \left ( \partial \Omega_D \right )$ then $T_n\mathbf{u}$ will be an
  element of $H^{-\half}(\Gamma)$ for all $\mathbf{u} \in \HdivD{D}{\Omega}$. If $\partial \Gamma \not \subset \partial \left ( \partial \Omega_D \right )$, this is no longer the case, although by defining
  $(T_n \mathbf{u}, w)_{\Gamma} := (T_n\mathbf{u}, E_{00}w)$ for all $w \in H_{00}^{1/2}(\Gamma)$, $T_n \mathbf{u}$ can be seen to lie in
  the space $H_{00}^{-\half}(\Gamma)$. Thus $T_n: \Hdiv{\Omega} \rightarrow H^{-1/2}(\partial \Omega)$ maps $\HdivD{D}{\Omega}$ to $H^{-1/2}(\Gamma)$ when $\partial \Gamma \subset \partial \Omega_D$, and to $H_{00}^{-1/2}(\Gamma)$ when $\partial \Gamma \not \subset \partial \Omega_D$. Similarly, by definition the trace operator $T: H^1(\Omega)\rightarrow H^{1/2}(\partial \Omega)$ maps the space $H^1_{0, D}(\Omega)$ to $H^{1/2}_{00}(\Gamma)$ when $\partial \Gamma \subset \partial \Omega_D$, and to $H^{1/2}(\Gamma)$ when $\partial \Gamma \not \subset \partial \Omega_D$.

\end{remark}

We will in this paper employ the operator preconditioning framework, see~\cite{mardal2011preconditioning}
for an overview. Hence, we briefly review the theory.  Let $\AA_\eps:X_\eps \rightarrow X_{\eps}\dual$ be an invertible symmetric isomorphism such
that
\begin{equation}
\label{abstractiso}
\|\AA_\eps\|_{\mathcal{L}(X_\eps, X_\eps\dual)} \le C_1 
\mbox{ and }
\|\AA^{-1}_\eps\|_{\mathcal{L}(X_\eps\dual, X_\eps)} \le C_2,  
\end{equation}
where the constants, $C_1$ and $C_2$,  are independent of the parameter $\eps$,  and $\eps$ 
may be a collection of parameters such as viscosity, permeability, the Lam{\' e} parameters
and the Beavers-Joseph-Saffman parameter. A parameter robust preconditioner is then derived as 
a Riesz mapping $\BB_\eps$ or an operator which is
spectrally equivalent with the Riesz mapping such that 
\[
\|\BB_\eps\|_{\mathcal{L}(X_\eps\dual, X_\eps)} \le C_3 
\mbox{ and }
\|\BB^{-1}_\eps\|_{\mathcal{L}(X_\eps, X_\eps\dual)} \le C_4.
\]
Here $C_3=C_4=1$ for the Riesz map, but in general we only require that the constants are bounded independently of the parameters.  
By construction, 
\[
\|\BB_\eps \AA_\eps \|_{\mathcal{L}(X_\eps, X_\eps)} \le C_1 C_3 
\mbox{ and }
\|(\BB_\eps \AA_\eps)^{-1} \|_{\mathcal{L}(X_\eps, X_\eps)} \le C_2 C_4
\]
and hence the condition number will be bounded  
\[
\operatorname{cond}( \BB_\eps \AA_\eps) = \|\BB_\eps \AA_\eps \|_{\mathcal{L}(X_\eps, X_\eps)} 
\|(\BB_\eps \AA_\eps)^{-1} \|_{\mathcal{L}(X_\eps, X_\eps)} 
\le C_1 C_2 C_3 C_4  . 
\]
Furthermore, any conforming discretization of the problem will inherit 
the bounds from the continuous case. Within this framework, the
challenge is then to identify the proper norms 
for which \eqref{abstractiso} can be established 
and subsequently establishing efficient preconditioners for the required Riesz maps.
Multilevel algorithms that efficiently realize the mappings have been developed 
for standard spaces such as $H^1$, $H(\operatorname{div})$, and $L^2$ 
and weighted variants, c.f. e.g.~\cite{mardal2011preconditioning}. Furthermore, 
fractional multilevel solvers have been constructed in for example~\cite{bramble2000computational, baerland2018multigrid}.

We conclude the section with two numerical experiments which
demonstrate issues with establishing the preconditioners for Darcy-Stokes
problem based on the existing analysis. In~\cite{layton2002coupling,galvis2007non}
the well-posedness of the problem was established and suitable finite element 
methods developed. In particular, \cite{galvis2007non} derive error estimates
in parameter dependent norms that are robust with respect to the material
parameters. Example \ref{ex:darcy_wrong} shows that these norms are not
sufficient to establish robust preconditioners. 

\begin{example}[Darcy-Stokes preconditioner based on \cite{galvis2007non}]\label{ex:darcy_wrong}
  For simplicity and only to illustrate that the norms of \cite{galvis2007non} are not sufficient 
  for our preconditioning purposes, we consider \eqref{eq:darcy_stokes_strong} with $\BJS=1$, $\mu=1$ and 
  $\semi{\partial\Omega_{i, N}}=0$, $i=p, f$. 
  The setup of this and the subsequent experiments is described in detail below in 
  Remark \ref{rmrk:setup}.
  Error estimates for the finite element discretization of the system, which were
  robust in material parameters, were derived in \cite{galvis2007non} and
  the utilized weighted norms yield the following tentative guess for $X_{\eps}$ in \eqref{abstractiso}:
\[
\left (\mathbf{H}^1_{0, D}(\Omega_f) \cap \mathbf{L}_{t}^2(\Omega_f) \right ) \times \frac 1 {\sqrt K} \la{H}_{0, D}(\operatorname{div}, \Omega_p) \times L^2(\Omega_f) \times \sqrt{K} L^2(\Omega_p) \times H^{1/2}(\Gamma)
\]
  The resulting preconditioner is then:  
\begin{equation}\label{eq:darcy_wrong}
\mathcal{B} = \begin{pmatrix}
-{\Delta} + D\tG\dual  \tG &&&& \\
	   & K^{-1}\left({I} - \nablab \nablab \cdot\right) &&& \\
	   &&     I && \\
	   &&&     KI & \\ 
	   &&&&  (-\Delta +I)^{1/2}
	   \end{pmatrix}^{-1}.
\end{equation}
Here, the first four components are standard components for preconditioning 
of Darcy and Stokes problems, c.f. \cite{2018arXiv181200653B, vassilevski2013block, mardal2011preconditioning}.
The $\tG\dual \tG $ term is a benign additional term
for the Stokes problem that, in our experience, does not affect the performance of the preconditioner 
as the term only increases the diagonal dominance in parts of the matrix. 
The final block then reflects $H^{1/2}$ as the appropriate space for Lagrange
multiplier, cf. \cite{galvis2007non}. As the authors have recently developed efficient multilevel
algorithms for such fractional problems \cite{baerland2018multigrid},
all the building blocks of \eqref{eq:darcy_wrong} can be realized by order optimal
preconditioners that are spectrally equivalent with the corresponding Riesz mappings. 
However, we shall here use LU for simplicity and to put focus on the Riesz maps themselves (rather than
their numerical approximations).

Using discretization by stable \DSelm\ element, see \cite{galvis2007non},
Table \ref{tab:layton} shows the number of MinRes iterations preconditioned by
\eqref{eq:darcy_wrong} for $10^{-8}\leq K \leq 1$. Sensitivity to $K$ is evident.
We remark that a preconditioner based on a weighted multiplier space $\sqrt{K} H^{1/2}$ yields poorer performance.  
  \begin{table}[h]
    \begin{minipage}{0.46\textwidth}
  \begin{center}
    \footnotesize{
 \begin{tabular}{c|ccccc}
              \hline
\multirow{2}{*}{$K$} & \multicolumn{5}{c}{$h$}\\
              \cline{2-6}
              & $2^{-3}$ & $2^{-4}$ & $2^{-5}$ & $2^{-6}$ & $2^{-7}$ \\
              \hline
              1        & 67 & 73 & 74 & 76 & 77\\
              $10^{-2}$ & 109 & 149 & 159 & 163 & 158\\
              $10^{-4}$ & 122 & 260 & 396 & 476 & 523\\
              $10^{-6}$ & 122 & 246 & 419 & 675 & 997\\
              $10^{-8}$ & 122 & 229 & 379 & 554 & 735\\
\hline
 \end{tabular}
 }
  \end{center}
  \caption{MinRes iterations for Darcy-Stokes problem \eqref{eq:darcy_stokes_strong}
    using preconditioner \eqref{eq:darcy_wrong}.}
  \label{tab:layton}
    \end{minipage}
\begin{minipage}{0.53\textwidth}
  \begin{figure}[H]
\centering
\includegraphics[width=\textwidth]{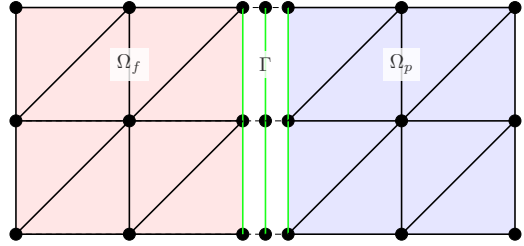}
\vspace{-20pt}
    \caption{Interface conforming tessellation $\mathcal{T}_h$ of
      domain $\Omega_f\cup\Omega_p$. Mesh of $\Gamma$ consists facets of elements
      in $\mathcal{T}_h$. Dashed line indicate correspondence of vertices.}
    \label{fig:mesh}
    \end{figure}
\end{minipage}
\end{table}

\end{example}

\begin{remark}[Common setup of experiments]\label{rmrk:setup}
  Throughout the paper we let $\Omega_f=\left[0, \tfrac{1}{2}\right]\times\left[0, 1\right]$,
  $\Omega_p=\left[\tfrac{1}{2}, 1\right]\times\left[0, 1\right]$ and
  $\Gamma=\left\{(x, y)\,|\,x=\tfrac{1}{2}, 0<y<1\right\}$ in the coupled problems.
  In the numerical experiments we consider a uniform triangulation $\mathcal{T}_h$ of $\Omega_p\cup\Omega_f$ into isosceles
  triangles with legs of size $h$. Further, the triangulation conforms to the interface in the
  sense that the no cell $K\in\mathcal{T}_h$ has its interior intersected by $\Gamma$. The mesh of $\Gamma$
  then consists of facets of $\mathcal{T}_h$, see also Figure \ref{fig:mesh}.
  The linear systems are assembled using the multiscale library FEniCS\textsubscript{ii} \cite{fenics_ii}, a module
  built on top of cbc.block \cite{Mardal2012} and the FEniCS framework \cite{fenics}.

  To solve the linear system $\mathcal{A}x=b$, a preconditioned minimal residual (MinRes) method
  is used with a random initial vector and convergence criterion based on
  relative preconditioned residual norm and tolerance $10^{-12}$. The blocks
  in the block diagonal preconditioners $\mathcal{B}$ are inverted exactly by
  LU factorization. The MinRes implementation as well as LU are provided by
  PETSc \cite{petsc}.

  For $s\in(-1, 1)$ the operators $-(\Delta + I)^s$ and $-(\Delta + I)_{00}^s$ on $\Gamma$
  are defined using an eigenvalue problem $-\Delta u + u=\lambda u$ with
  homogeneous Neumann respectively Dirichlet boundary conditions on $\partial\Gamma$.
  The discrete operator is computed by spectral decomposition as detailed 
  in \cite{kuchta2016preconditioners}. The discrete Laplacian on the piecewise constant field $Q_h$ is
  defined as
  \begin{equation}
  \label{disk:Laplace}
  (-\Delta p_h, q_h) = \sum_{E_I} \int_{E_I} \avg{h}^{-1}\jump{p} \jump{q}\mathrm{d}s + \sum_{E_D} \int_{E_D} h^{-1} p q\mathrm{d}s \quad p_h, q_h\in Q_h,
  \end{equation}
  where $E_I$ is the set of internal facets of the mesh, while $E_D$ is the 
  set of facets associated with the Dirichlet boundary. The average and jump
  operators are defined as $\avg{p}=\tfrac{1}{2}(p|_{K^{+}}+p|_{K^{-}})$, $\jump{p}=p|_{K^{+}}-p|_{K^{-}}$
  with $K^{+}$ and $K^{-}$ the two cells sharing the facet in $E_I$. Note that
  the set $E_D$ is empty for the operator $(-\Delta+I)^{s}$,  
  while $E_D$ is not empty  for $(-\Delta+I)_{00}^{s}$.

  Condition number estimate of the preconditioned linear system is obtained
  by solving the eigenvalue problem $\mathcal{A}x=\lambda \mathcal{B}^{-1}x$. If the
  number of unknowns is less than 8 thousand the entire spectrum is computed. Otherwise
  an iterative Krylov-Schur solver from SLEPc \cite{Hernandez:2005:SSF} is used to find
  the extreme eigenvalues. Here, the tolerance is set to $10^{-3}$.

  The finite element approximation error is computed by first interpolating the error into the space of
  discontinuous piecewise polynomials of degree $p+2$ where $p$ is the degree of the numerical
  solution. For the $H^{s}$ norm the error in interpolated in the space of piecewise linear
  polynomials while piecewise constant elements are used to discretize $H^s$.
\end{remark}

In~\cite{galvis2007non}, the results of \cite{layton2002coupling} were extended while paying
special attention to the material parameters and boundary conditions for the Lagrange multiplier. 
In Example \ref{ex:darcy_wrong}  
we showed that the results of~\cite{galvis2007non} cannot be directly extended to 
proper preconditioning within the operator preconditioning framework. One reason for this is
associated with the boundary conditions for the Lagrange multiplier. We therefore proceed
with a simplified example where this issue is addressed. 

\begin{example}[Boundary conditions in $H^s$]\label{ex:poisson_bcs}
Let $\Omega_1$, $\Omega_2$ be two domains with a common interface $\Gamma$. Further,
let $\partial\Omega_i\setminus\Gamma$ be decomposed into a Dirichlet boundary $\partial\Omega_{i,D}$ 
and a Neumann boundary $\partial\Omega_{i, N}$ such that $\semi{\partial\Omega_{i, D}} > 0$ for $i=1, 2$. We 
then consider the coupled problem\\
%
\begin{minipage}{0.56\textwidth}
  \flushleft{
\begin{equation}\label{eq:poisson_strong}
    \begin{aligned} 
      -\nabla\cdot(\kappa_i\nabla u_i) &= f_i&\text{ in } \Omega_i,\\
      u_i &= g_i &\text{ on }\partial\Omega_{i, D},\\
      \kappa_i\nabla u_i\cdot n_i &= h_i&\text{ on }\partial\Omega_{i, N},\\
      \kappa_1\nabla u_1\cdot n - \kappa_2\nabla u_2\cdot n &= h&\text{ on }\Gamma,\\
      u_1 - u_2 &= g&\text{ on }\Gamma,
    \end{aligned}
\end{equation}
}
\null\par\xdef\tpd{\the\prevdepth}
\end{minipage}
\begin{minipage}{0.42\textwidth}
\begin{figure}[H]
    \begin{center}
    \includegraphics[width=\textwidth]{img/inline_tex_poisson.tex}
     \end{center}
    \vspace{-15pt}
    \caption{Neumann-Dirichlet problem \eqref{eq:poisson_strong}. Interface intersects domain 
    with different boundary conditions on the subdomain boundaries.
    }
    \label{fig:domain}
\end{figure}
  \end{minipage}\\
%

\vspace{0.2cm}
  where in general $g\neq 0$ and thus for $\Omega=\Omega_1\cup\Omega_2$ if $u$ is such that $u|_{\Omega_i}=u_i$ then
  $u\notin H^1{(\Omega)}$ as $u$ is broken at $\Gamma$. In terms of finite element approximation we construct $u$ using function 
  spaces defined separately on $\Omega_1$ and $\Omega_2$. Based on intersection of $\Gamma$ with the boundary condition domains in \eqref{eq:poisson_strong} we shall investigate
  three different coupled problems with $\semi{\partial\Omega_{i, D}} > 0$, $i=1, 2$.
  In (DD) case $\semi{\partial\Omega_{i, N}}=0$, (ND) $\semi{\partial\Omega_{1, N}} > 0$, $\Gamma\cap\partial\Omega_{1, D}=\emptyset$
  and $\semi{\partial\Omega_{2, N}}=0$, (NN)
  $\semi{\partial\Omega_{i, N}} > 0$ and $\Gamma\cap\partial\Omega_{i, D}=\emptyset$. 
A schematic of the geometry of Neumann-Dirichlet problem (ND) considered further is shown in Figure \ref{fig:domain}.
A closely related application in cardiac modeling can be found in~\cite{tveito2017cell}.

Introducing a Lagrange multiplier $\lambda=-\kappa_1\nabla u_1\cdot n$, the weak form of \eqref{eq:poisson_strong} 
is given by operator 
\begin{equation}\label{eq:poisson_op}
    \mathcal{A}=\begin{pmatrix}
        -\kappa_1\Delta_1   & & T'_1\\
        & -\kappa_2\Delta_2 & -T'_2\\
        T_1 & -T_2 &
    \end{pmatrix},
\end{equation}
where $T_i=u_i|_\Gamma$ are the trace operators. In the following we shall construct 
preconditioners for $\mathcal{A}$ which are robust in discretization as well as the 
jump in $\kappa_i$ across the interface. 

Let us illustrate the construction by considering the (ND) problem first. 
Then, the left part of the problem is
\begin{equation}\label{eq:single}
\begin{aligned}
    -\kappa_1\Delta u_1 &= f_1&\mbox{ in }\Omega_1,\\
            u_1 &= g_1&\mbox{ on }\Gamma\cup\partial\Omega_{1, D},\\
            \kappa_1\nabla u_1\cdot n &= h_1 &\mbox{ on }\partial\Omega_{1, N},
\end{aligned}
\end{equation}
where the Dirichlet boundary condition on $\Gamma$ shall be enforced by 
a Lagrange multiplier. Then the trace operator maps 
$\sqrt{\kappa_1} H^1(\Omega_1) \rightarrow \sqrt{\kappa_1} H^{1/2}(\Gamma)$ 
because $\Gamma$ intersects only the Neumann part of $\partial \Omega_1$ and
the following preconditioner yields robust convergence 
\begin{equation}\label{eq:single_N}
    \mathcal{B}_N = \begin{pmatrix}
        -\kappa_1\Delta & \\
         & \kappa_1^{-1}(-\Delta+I)^{-1/2}\\
    \end{pmatrix}^{-1}.
\end{equation}
On the other hand, if we consider only the right part of the problem then the 
trace maps onto $\sqrt{\kappa_2} H_{00}^{1/2}(\Gamma)$ because $\Gamma$ intersects
Dirichlet boundary $\partial\Omega_{2, D}$ at both ends. The preconditioner
therefore becomes 
\begin{equation}\label{eq:single_D}
    \mathcal{B}_D = \begin{pmatrix}
        -\kappa_2\Delta & \\
         & \kappa_2^{-1}(-\Delta+I)_{00}^{-1/2}
    \end{pmatrix}^{-1}.
\end{equation}


Considering also the (DD) and (NN) problems we conclude that 
the operator $\mathcal{A}$ in \eqref{eq:poisson_op}
is an isomorphism $W\rightarrow W\dual$ with
\[
W=\sqrt{\kappa_1} H^1_{0, D}(\Omega_1)\times \sqrt{\kappa_2} H^1_{0, D}(\Omega_2)\times Q(\Gamma)
\]
where for the three cases we define $Q(\Gamma)$ as 
\begin{equation}\label{eq:cases}
    \begin{aligned}
      &(\text{DD}) \quad &\sqrt{\kappa_1^{-1}} H^{1/2}_{00}(\Gamma)\cap \sqrt{\kappa^{-1}_2} H^{1/2}_{00}(\Gamma),\\
      &(\text{NN}) \quad &\sqrt{\kappa_1^{-1}} H^{1/2}(\Gamma)\cap \sqrt{\kappa^{-1}_2} H^{1/2}(\Gamma),\\
      &(\text{ND}) \quad &\sqrt{\kappa_1^{-1}} H^{1/2}(\Gamma)\cap \sqrt{\kappa^{-1}_2} H^{1/2}_{00}(\Gamma).
    \end{aligned}
\end{equation}
The three preconditioners are then the Riesz maps with respect to the inner products of the corresponding
spaces. We remark that the ND case is the most challenging because of the mixed boundary condition
and is as such the focus of the following discussion. 

Using discretization in terms of \pelm{2}-\pelm{2}-\pelm{0} elements we demonstrate 
robustness of the canonical Riesz map preconditioners based on \eqref{eq:cases} 
by considering the preconditioned eigenvalue problems $\mathcal{A}x=\lambda\mathcal{B}^{-1}x$
where $\mathcal{B}=\text{diag}(-\Delta, S)$. For the (ND) problem operator $S$ is
\begin{equation}\label{eq:poisson_ND}
  S = \kappa_1^{-1}(-\Delta+I)^{-1/2} + \kappa_2^{-1}(-\Delta+I)_{00}^{-1/2}
  \end{equation}
and the Laplacian is defined as \eqref{disk:Laplace}.

In Table \ref{poisson_bcs_tab} we show the condition number of  \eqref{eq:poisson_op} in (ND) case
with different preconditioners. Only the (ND) preconditioner using \eqref{eq:poisson_ND}
can be seen to be robust both in the parameters and the discretization. The preconditioners
(DD) and (NN) seem $h$-robust when the parameters $\kappa_i$ are such that the effect
of the improper boundary conditions is relatively small.

Without including the results we remark that we have also verified that for the (DD) and (NN) problems  
the (DD) and (NN) preconditioners, respectively, are robust.  

\begin{table}
\label{poisson_bcs_tab}
\begin{center}
  \footnotesize{
  \begin{tabular}{c|ccccccc}
  \hline
  \multirow{2}{*}{$\kappa_2/\kappa_1$} & \multicolumn{6}{c}{$h$}\\
  \cline{2-8}
  & $2^{-1}$ & $2^{-2}$ & $2^{-3}$ & $2^{-4}$ & $2^{-5}$ & $2^{-6}$ & $2^{-7}$ \\
  \hline 
$10^6$  &     4.33&    5.07& 	5.36&	5.44&	5.46&	5.46&	5.46\\
$1$     &    4.40&    5.05&	5.33&	5.42&	5.45&	5.46&	5.46\\
 $10^{-6}$    &  5.49&  5.60&	5.68&	5.73&	5.75&	5.75&	5.75\\

  \hline 
$10^6$   & 5.00&	6.45&	7.47&	8.34&	9.18&	10.03&	10.89\\
$1$      & 4.69&	5.63&	6.32&	6.90&	7.46&	8.02 &	8.61\\
$10^{-6}$ & 5.49&	5.60&	5.68&	5.73&	5.75&	5.75 &	5.75\\

  \hline 
$10^6$  &  4.33&	5.07 &	5.36 &	5.44 &	5.46 &	5.46 &	5.46\\
$1$     &  5.36&	5.81 &	6.00 &	6.08 &	6.11 &	6.12 &	6.13\\
$10^{-6}$& 10.02&	13.01&	15.89&	18.80&	21.86&	25.13&	28.64\\

\hline
  \end{tabular}
  }
\end{center}
\caption {Spectral condition numbers for (ND) problem of \eqref{eq:poisson_strong}. Upper row (ND)
  preconditioner, middle row (DD) preconditioner and bottom row (NN)
  preconditioner.}
\end{table}
\end{example}

Parameter robust preconditioners for the Darcy-Stokes and Stokes-Navier
systems shall be derived within a general framework for coupled multiphysics/
multiscale problems.

\section{Abstract Framework}\label{sec:abstract}
Let us assume in the following that there are two saddle point problems which are both well-posed and 
have some of the Dirichlet boundary conditions enforced in terms of Lagrange multipliers on part of
the boundary. The unknowns can be either vector or scalar fields. Hence, there shall be two problems ($i=1,2$) of the form:
  Find $(u_i, p_i, \lambda_i) \in V_i \times Q_i \times \Lambda_i$ such that
\begin{equation} \label{eq:abstractsubprob}
    \AA_i  
    \left(
    \begin{array}{c} u_i \\ p_i \\ \lambda_i \end{array} \right) 
    =  
    \left(
    \begin{array}{ccc}
      A_i & B_i\dual & T_i\dual\\
      B_i & & \\
      T_i & &
    \end{array}
    \right) \left(
    \begin{array}{c}
      u_i \\ p_i \\ \lambda_i
    \end{array}
    \right) =  \left(\begin{array}{c}
      f_i \\ g_i \\ h_i
    \end{array} \right)
    \in \left(\begin{array}{c}
      V\dual_i \\ Q\dual_i \\ \Lambda\dual_i
    \end{array} \right).
\end{equation}
The well-posedness is guaranteed by the Brezzi conditions \cite{brezzi1974existence}, which in our setting read
\begin{subequations}
  \begin{align}
	\label{absbrezzi1}
	&(A_i u_i, u_i) \ge \alpha_i \|u_i\|^2_{Z_i}, \quad\forall u_i\in Z_i,  \\
	\label{absbrezzi2}
	&(A_i u_i, v_i) \le C_i \|u_i\|^2_{V_i} \|v_i\|^2_{V_i},\quad\forall u_i, v_i\in V_i, V_i\\  
	\label{absbrezzi3}
	&\sup_{u_i\in V_i} \frac{(B_i u_i, q_i) + (T_i u_i, \lambda_i)}{\|u_i\|_{V_i}}  \ge \beta_i(\|q_i\|^2_{Q_i} + \|\lambda_i\|^2_{\Lambda_i})^{1/2},\quad\forall q_i,\lambda_i \in Q_i, \Lambda_i, \\
	\label{absbrezzi4}
	&(B_i u_i, q_i) + (T_i u_i, \lambda_i)  \le D_i \|u_i\|_{V_i} (\|q_i\|^2_{Q_i} + \|\lambda_i\|^2_{\Lambda_i})^{1/2},\quad \forall u_i, q_i, \lambda_i \in V_i, Q_i, \Lambda_i,
  \end{align}        
\end{subequations} 
where $Z_i = \{u_i \in V_i \ | \ (B_i u_i, q_i) + (T_i u_i, \lambda_i) = 0,\quad \forall q_i, \lambda_i \in Q_i, \Lambda_i \} $.
We shall also consider the following condition, which is stronger than \eqref{absbrezzi1},  
but more commonly considered in single-physics problems with Dirichlet boundary conditions enforced in the standard way. Namely,  
\begin{eqnarray}
  \label{absbrezzi1kerB}
	(A_i u_i, u_i) &\ge& \alpha_i \|u_i\|^2_{Z_i},\quad\forall u_i\in \{u_i \in V_i \ | \ (B_i u_i, q_i) = 0,\quad \forall q_i \in Q_i \}.
\end{eqnarray} 
The Brezzi conditions ensure that both 
\[
\|A_i\|_{\mathcal{L}((V_i\times Q_i\times \Lambda_i),(V_i\times Q_i\times\Lambda_i)\dual)} \quad \mbox{and} \quad \|(A_i)^{-1}\|_{\mathcal{L}((V_i\times Q_i\times \Lambda_i)\dual,(V_i\times Q_i\times \Lambda_i))}  
\]
are bounded and the last bound can alternatively be written in the following form, which will be used later,   
\begin{equation}
\label{wellposednessdfn} \|u_i\|_{V_i} + \|p_i\|_{Q_i} + \|\lambda_i\|_{\Lambda_i}
\leq E_i \left ( \|f_i\|_{{V_i}\dual} + \|g_i\|_{Q_i\dual} + \|h_i\|_{\Lambda_i\dual} \right ).
\end{equation}
Here, $E_i$ depends only on $\alpha_i$, $\beta_i$, $C_i$ and $D_i$. 

Let us then consider the existence and uniqueness of the coupled problem:
Find $(u_1, p_1, u_2, p_2, \lambda) \in V_1 \times Q_1 \times V_2 \times Q_2 \times \Lambda_1\cap \Lambda_2$ such that
\begin{equation}
\label{eq:abstractcoupledprob}
\AA \left (
    \begin{array}{c}
      u_1 \\ u_2 \\ p_1 \\ p_2 \\ \lambda
    \end{array}
\right ) =  
\left (\begin{array}{ccccc}
           A_i & & B_i\dual & & T_1\dual\\
               & A_2 & & B_2\dual & T_2\dual \\
           B_1  & & & & \\
               & B_2 & & & \\
           T_1 & T_2 & & & \\
    \end{array}
\right )   \left (
    \begin{array}{c}
      u_1 \\ u_2 \\ p_1 \\ p_2 \\ \lambda
    \end{array}
\right ) =  \left (\begin{array}{c}
      f_1 \\ f_2 \\ g_1 \\ g_2 \\ h
    \end{array} \right )
\in
\left (\begin{array}{c}
      V_1\dual \\ V_2\dual \\ Q_1\dual \\ Q_2\dual \\ \Lambda\dual_1+ \Lambda\dual_2  
    \end{array} \right ).
\end{equation}
We remark that $T_i: V_i \rightarrow \Lambda_i\dual$ and 
hence $T_1 u_1 + T_2 u_2 \in \Lambda_1\dual + \Lambda_2\dual$. Therefore,  
$\lambda\in \Lambda_1 \cap \Lambda_2$ since
$(\Lambda_1\cap \Lambda_2)\dual=\Lambda\dual_1+ \Lambda\dual_2$.

Our main result concerning \eqref{eq:abstractcoupledprob} is stated in the
following theorem.

{
\begin{theorem}\label{thm:abstractcoupledprob}
  Suppose that the problems \eqref{eq:abstractsubprob} satisfy the Brezzi conditions\\
  \eqref{absbrezzi1}--\eqref{absbrezzi4} in $V_i \times Q_i \times \Lambda_i$, $i=1, 2$
  and the coercivity condition \eqref{absbrezzi1kerB}. Then the coupled problem \eqref{eq:abstractcoupledprob} is well posed in
$W=V_1 \times Q_1 \times V_2 \times Q_2 \times (\Lambda_1 \cap \Lambda_2)$ in the sense that 
\[
\|\AA\|_{\mathcal{L}(W,W\dual)} \quad \mbox{and} \quad \|\AA^{-1}\|_{\mathcal{L}(W\dual,W)}  
\]
are bounded by some positive constant
$C$ depending only on the Brezzi constants of problems \eqref{eq:abstractsubprob}. 

\end{theorem}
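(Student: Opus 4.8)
The plan is to verify the three hypotheses of a block-form Brezzi-type theorem for the coupled operator $\AA$ in \eqref{eq:abstractcoupledprob}, viewed as a saddle-point problem with the "velocity-pressure" block $\bigl(u_1, u_2, p_1, p_2\bigr)$ playing the role of the primal variable and $\lambda$ the sole multiplier. Concretely, set $\mathcal{V} = V_1 \times Q_1 \times V_2 \times Q_2$ with the obvious product norm, let $\mathcal{A}_0 = \operatorname{diag}\!\bigl(A_1, 0, A_2, 0\bigr)$ acting on $\mathcal{V}$, let $\mathcal{B}_0\colon \mathcal{V} \to (Q_1 \times Q_2)\dual$ collect the two divergence constraints, and let $\mathcal{T}\colon \mathcal{V} \to (\Lambda_1\cap\Lambda_2)\dual$ be $(u_1, p_1, u_2, p_2)\mapsto T_1 u_1 + T_2 u_2$. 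Boundedness of $\AA$, i.e.\ $\|\AA\|_{\mathcal{L}(W,W\dual)}\le C$, follows immediately by combining \eqref{absbrezzi2}, \eqref{absbrezzi4} for each subproblem with the definition of the intersection norm $\|\lambda\|_{\Lambda_1\cap\Lambda_2}^2 = \|\lambda\|_{\Lambda_1}^2 + \|\lambda\|_{\Lambda_2}^2$, since $(\Lambda_1\cap\Lambda_2)\dual = \Lambda_1\dual + \Lambda_2\dual$ and $T_i u_i \in \Lambda_i\dual$. So the content is the inf-sup condition and the coercivity on the joint kernel.

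The first main step is the combined inf-sup condition: for every $(q_1, q_2, \lambda) \in Q_1 \times Q_2 \times (\Lambda_1\cap\Lambda_2)$,
\[
\sup_{(u_1,u_2)\in V_1\times V_2} \frac{(B_1 u_1, q_1) + (B_2 u_2, q_2) + (T_1 u_1, \lambda) + (T_2 u_2, \lambda)}{\bigl(\|u_1\|_{V_1}^2 + \|u_2\|_{V_2}^2\bigr)^{1/2}} \;\ge\; \beta\,\bigl(\|q_1\|_{Q_1}^2 + \|q_2\|_{Q_2}^2 + \|\lambda\|_{\Lambda_1\cap\Lambda_2}^2\bigr)^{1/2}.
\]
Since the supremum over the product space decouples, the left side equals $\bigl(S_1^2 + S_2^2\bigr)^{1/2}$ where $S_i = \sup_{u_i} \bigl[(B_i u_i, q_i) + (T_i u_i, \lambda)\bigr]/\|u_i\|_{V_i}$. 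View $\lambda$ as an element of $\Lambda_i$ (legitimate because $\lambda\in\Lambda_1\cap\Lambda_2$); then \eqref{absbrezzi3} for subproblem $i$ gives $S_i \ge \beta_i\bigl(\|q_i\|_{Q_i}^2 + \|\lambda\|_{\Lambda_i}^2\bigr)^{1/2}$. Squaring and summing over $i=1,2$ yields $S_1^2 + S_2^2 \ge \min(\beta_1,\beta_2)^2\bigl(\|q_1\|_{Q_1}^2 + \|q_2\|_{Q_2}^2 + \|\lambda\|_{\Lambda_1}^2 + \|\lambda\|_{\Lambda_2}^2\bigr)$, which is exactly the claim with $\beta = \min(\beta_1,\beta_2)$. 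This is clean because the intersection-space norm on $\Lambda_1\cap\Lambda_2$ is precisely the $\ell^2$ combination that the two sub-inf-sup conditions independently control — no interface estimate is needed beyond what the subproblems already provide.

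The second main step, and the one I expect to be the genuine obstacle, is coercivity of $\mathcal{A}_0$ on the kernel $\mathcal{Z} = \{(u_1,p_1,u_2,p_2) : B_i u_i = 0 \text{ and } T_1 u_1 + T_2 u_2 = 0\}$. We need $(A_1 u_1, u_1) + (A_2 u_2, u_2) \ge \alpha\bigl(\|u_1\|_{V_1}^2 + \|u_2\|_{V_2}^2\bigr)$ together with control of $p_1, p_2$ — but $\mathcal{A}_0$ is degenerate in the pressure components, so one actually only needs coercivity in the $u$-variables on $\mathcal{Z}$, with the pressures recovered from the inf-sup step via the standard Brezzi argument. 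The subtlety is that an element of $\mathcal{Z}$ satisfies $B_i u_i = 0$ but \emph{not} necessarily $T_i u_i = 0$: only the sum vanishes, so $u_i$ need not lie in the smaller kernel $Z_i$ of \eqref{absbrezzi1}. This is exactly why the hypothesis includes the stronger coercivity \eqref{absbrezzi1kerB}, which holds on all of $\{u_i : B_i u_i = 0\}$ — a strictly larger set than $Z_i$. With \eqref{absbrezzi1kerB} in hand, for any $(u_1,p_1,u_2,p_2)\in\mathcal{Z}$ we have $B_i u_i = 0$, hence $(A_i u_i, u_i)\ge\alpha_i\|u_i\|_{V_i}^2$ (using $\|\cdot\|_{Z_i}\sim\|\cdot\|_{V_i}$ on this set, or directly if \eqref{absbrezzi1kerB} is stated in the $V_i$-norm), and summing with $\alpha = \min(\alpha_1,\alpha_2)$ closes it. I would present the argument in this order — boundedness, combined inf-sup, kernel coercivity via \eqref{absbrezzi1kerB} — and then invoke the abstract Brezzi theorem (e.g.\ \cite{brezzi1974existence}) to conclude that $\AA$ and $\AA^{-1}$ are bounded on $W$ with constants depending only on $\alpha_i,\beta_i,C_i,D_i$; the careful point to flag in the writeup is precisely the role of \eqref{absbrezzi1kerB} versus \eqref{absbrezzi1}, since it is the one place the coupled problem demands more than the bare well-posedness of each subproblem.
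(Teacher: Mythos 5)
Your proof is correct, and its skeleton — boundedness of the off-diagonal blocks from \eqref{absbrezzi2} and \eqref{absbrezzi4}, coercivity of $\diag(A_1,A_2)$ on the joint kernel via \eqref{absbrezzi1kerB} because that kernel sits inside $\ker B_1\times\ker B_2$ but not inside $Z_1\times Z_2$, and a combined inf-sup over $(u_1,u_2)$ against $(q_1,q_2,\lambda)$ — is exactly the paper's decomposition (your opening framing, which puts the pressures in the primal block and leaves $\lambda$ as the sole multiplier, does not match the steps you actually carry out, but the steps themselves are the right ones). The one genuinely different ingredient is the inf-sup proof. The paper constructs explicit test functions $u_i^*$ by solving the auxiliary subproblems with data $(0,\Rszi{Q_i}q_i,\Rszi{\Lambda_i}\lambda)$ and then invokes the full stability estimate \eqref{wellposednessdfn}, which yields the coupled inf-sup constant $1/(2\max(E_1,E_2)^2)$ in terms of the subproblems' well-posedness constants. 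You instead observe that the supremum of $(u_1,u_2)\mapsto L_1(u_1)+L_2(u_2)$ over the $\ell^2$-product norm equals $(\|L_1\|_{V_1\dual}^2+\|L_2\|_{V_2\dual}^2)^{1/2}$ and apply \eqref{absbrezzi3} to each factor, with $\lambda$ legitimately viewed in each $\Lambda_i$ since $\lambda\in\Lambda_1\cap\Lambda_2$; the intersection norm is precisely the $\ell^2$ sum $\|\lambda\|_{\Lambda_1}^2+\|\lambda\|_{\Lambda_2}^2$, so everything matches up. Your route is shorter, uses only \eqref{absbrezzi3} rather than the full solvability of the subproblems for this step, and gives the sharper constant $\min(\beta_1,\beta_2)$; the paper's route has the advantage of producing the stabilizing element explicitly, which can be convenient when one wants to track how the coupled solution depends on the subproblem solution operators. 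Either way the conclusion and the dependence of $C$ only on the Brezzi constants are the same.
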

}

\begin{proof}[Proof]
  We verify the Brezzi conditions for \eqref{eq:abstractcoupledprob}
  in the form
  
  \[
  \AA = 
  \left(\begin{array}{cc}
  A & B\dual \\
  B &     \\
    \end{array}
  \right ),
  \mbox{ where }
	A = \left (\begin{array}{cc}
           A_1 & \\ 
           & A_2 
    \end{array}
\right) \quad \mbox{ and } \quad 
B = 
\left(\begin{array}{cc}
B_1  &  \\
& B_2  \\
T_1 & T_2  \\
\end{array}
\right), 
\]
that is, by considering $\AA$ as an operator on $V \times Q$ where $V = V_1 \times V_2$ and $Q = Q_1 \times Q_2 \times \left ( \Lambda_1 \cap \Lambda_2 \right )$. The boundedness of $A$ follows from \eqref{absbrezzi1} because for any $u_1, v_1 \in V_1, u_2, v_2 \in V_2$ we have that
\begin{equation*}
\begin{split}
  (A (u_1, u_2), (v_1, v_2)) &= (A_1 u_1, v_1) + (A_2u_2, v_2)\\
  & \leq \max(C_1, C_2) \|(u_1, u_2)\|_{V_1\times V_2} \|(v_1, v_2)\|_{V_1 \times V_2}.
\end{split}
\end{equation*}
Similarly, the boundedness of $B$ follows from \eqref{absbrezzi4}: 	
\begin{align*}
  (B_1u_1, p_1) &+ (T_1 u_1, \lambda) + (B_2u_2, p_2) + (T_2 u_2, \lambda) \leq\\
	 &\leq D_1 \|u_1\|_{V_1} \left( \|p_1\|^2_{Q_1} + \|\lambda\|^2_{\Lambda_1}\right)^{\frac 1 2} 
	 +D_2 \|u_2\|_{V_2} \left( \|p_2\|^2_{Q_2} + \|\lambda\|^2_{\Lambda_2}\right)^{\frac 1 2}  \\
   & \leq  \max(D_1, D_2) \left (\|u_1\|_{V_1}^2 + \|u_2\|_{V_2}^2 \right )^{\frac 1 2} \left ( \|p_1\|^2_{Q_1} + \|\lambda\|^2_{\Lambda_1} + \|p_2\|^2_{Q_2} + \|\lambda\|^2_{\Lambda_2} \right )^{\frac 1 2} \\
 & = \max(D_1, D_2) \|(u_1, u_2)\|_{V_1 \times V_2} \left ( \|(p_1, p_2)\|^2_{Q_1 \times Q_2} + \|\lambda\|^2_{\Lambda_1 \cap \Lambda_2}\right )^{\frac 1 2} 
\end{align*}
for all $(u_1, u_2, p_1, p_2, \lambda) \in V_1 \times V_2 \times Q_1 \times Q_2 \times (\Lambda_1 \cap \Lambda_2)$, where the second inequality is the Cauchy-Schwarz inequality. Hence $A$ and $B$ are both bounded, with boundedness constants depending only on those of the subproblems.

For coercivity, note that because $
   B(u_1, u_2) = \left (
     \begin{array}{c}
       B_1u_1 \\ B_2u_2 \\ T_1u_1 + T_2u_2
     \end{array} \right ),$ we have that  $$\ker B = \left (\ker B_1 \times \ker B_2\right ) \cap \ker \left ( \begin{array}{cc} T_1 & T_2 \end{array} \right) \subset \ker B_1 \times \ker B_2.$$
     By assumption \eqref{absbrezzi1kerB}, 
     $A_i$ is coercive on $\ker B_i$ with coercivity constant $\alpha_i$,
     meaning that $(A_iu_i, u_i) \geq \alpha_i \|u_i\|^2$ for all $u_i \in \ker B_i$. Hence for any $(u_1, u_2) \in \ker B$, 
     \begin{equation*}
       \begin{split}
         (A(u_1, u_2), (u_1, u_2))_{V_1 \times V_2} &= (A_1u_1, u_1)_{V_1} + (A_2u_2, u_2)_{V_2} \\
         & \geq \alpha_1 \|u_1\|_{V_1}^2  + \alpha_2 \|u_2\|_{V_2}^2 \\
         & \geq \min(\alpha_1, \alpha_2) \|(u_1, u_2)\|^2_{V_1 \times V_2}.
       \end{split}
\end{equation*}
Thus $A$ is coercive on $\ker B$ with constant $\min(\alpha_1, \alpha_2)$.
 
To prove the inf-sup condition, let $\Rszi{{Q_i}}: Q_i \to Q_i{\dual}$, 
$\Rszi{{\Lambda_i}}: \Lambda_i \to \Lambda_i{\dual}$ be the inverse Riesz maps of their corresponding spaces. By the Riesz representation theorem, this is an isometry between $Q$ and $Q\dual$, meaning that $(\Rszi{Q}q, q) = \|q\|^2_{Q}$.

Given $(q_1, q_2, w) \in Q_1 \times Q_2 \times \Lambda\dual_1 \cap \Lambda\dual_2$, let $u^*_i, p^*_i, \lambda^*_i$ be the solution of \[\left (
    \begin{array}{ccc}
      A_i & B_i\dual & T_i\dual\\
      B_i & & \\
      T_i & &
    \end{array}
  \right )
  \left (
    \begin{array}{c}
      u^*_i \\ p^*_i \\ \lambda^*_i
    \end{array}
\right ) = \left ( \begin{array}{c}
      0 \\ \Rszi{{Q_i}} q_i \\ \Rszi{{\Lambda_i}} w
    \end{array} 
\right ) \: \text{ for $i=1, 2$ .}
\] 

Considering $u^*_i = u^*_i(q_i, w)$ as a function of $q_i, w$, by \eqref{wellposednessdfn} we have that

\begin{equation} \label{eq:subprobconstantdfn}
\|u^*_i\|^2_{V_i} \leq 2E^2_i \left ( \|\Rszi{{Q_i}}q_i\|^2_{{Q_i}\dual} +\|\Rszi{{\Lambda_i}}w\|^2_{{\Lambda_i}\dual} \right ) = 2E^2_i \left ( \|q_i\|^2_{Q_i} +\|w\|^2_{\Lambda_i} \right )
\end{equation}
for any $(q_i, w)\in Q\dual_i \times \Lambda\dual_i$. 

Further, we have that $(B_iu^*_i, q_i) + (T_iu^*_i, w) = (\Rszi{{Q_i}}q_i, q_i) + (\Rszi{{\Lambda_i}}w, w) = \|q_i\|^2_{Q_i} + \|w\|^2_{\Lambda_i}$. Combining the results

\begin{eqnarray*}
  & \sup_{(u_1, u_2)\in V_1\times V_2}\frac{(B_1u_1, q_1) + (B_2u_2, q_2) + (T_1u_1, w) + (T_2u_2, w)}{(\|u_1\|^2_{V_1} + \|u_2\|^2_{V_2})^{\half}} \\
  \geq & \frac{(B_1u^*_1, q_1) + (B_2u^*_2, q_2)  + (T_1u^*_1, w) + (T_2u^*_2, w)}{(\|u^*_1\|^2_{V_1} + \|u^*_2\|^2_{V_2})^{\half}} \\
  \geq & \frac 1 E \frac{\|q_1\|^2_{Q_1} + \|q_2\|^2_{Q_2} + \|w\|^2_{\Lambda_1} + \|w\|^2_{\Lambda_2}}{(\|q_1\|^2_{Q_1} + \|q_2\|^2_{Q_2} + \|w\|^2_{\Lambda_1} + \|w\|^2_{\Lambda_2})^{\half}} \\
  = & \frac 1 E (\|q_1\|^2_{Q_1} + \|q_2\|^2_{Q_2} + \|w\|^2_{\Lambda_1} + \|w\|^2_{\Lambda_2})^{\half} \\
  = & \frac 1 E (\|q_1\|^2_{Q_1} + \|q_2\|^2_{Q_2} + \|w\|^2_{\Lambda_1 \cap \Lambda_2})^{\half},
\end{eqnarray*}
where  $E = 2\max(E_1, E_2)^2$. Hence we have the desired inf-sup condition.

As all the Brezzi conditions hold, we have the bound \eqref{wellposednessdcoupled}.
\end{proof}

We remark that the boundedness of the inverse map can be written as the stability estimate
\begin{equation}
  \label{wellposednessdcoupled}
  \begin{split}
\|u_1\|_{V_1} + \|p_1\|_{Q_1} +& \|u_2\|_{V_2} + \|p_2\|_{Q_2} + \|\lambda\|_{\Lambda_1 \cap \Lambda_2}\\
&\leq C
\left ( \|f_1\|_{V_1\dual} + \|g_1\|_{Q_1\dual} + 
\|f_2\|_{V_2\dual} + \|g_2\|_{Q_2\dual} + \|h\|_{\Lambda_1\dual + \Lambda_2\dual} \right )
\end{split}
\end{equation}
which corresponds the results of~\cite{galvis2007non} for the Darcy--Stokes problem 
except that instead of 
$\|h\|_{\Lambda_1\dual + \Lambda_2\dual}$
they use
$\max(\|h\|_{\Lambda_1\dual}, \|h\|_{\Lambda_2\dual})$.

\section{Robust Preconditioners for the Darcy--Stokes system}\label{sec:darcy-stokes}
In this section we derive parameter robust preconditioners for the Darcy-Stokes
problem \eqref{eq:darcy_stokes_strong} within the framework presented in \S \ref{sec:abstract}.
In particular, the Lagrange multiplier enforcing the mass-conservation condition
\eqref{eq:darcystokesmass} shall be established in a suitable intersection space
of fractional spaces. As we saw in Example \ref{ex:darcy_wrong}, preconditioners that  ignore the structure of
the multiplier space are not robust with respect to certain parameter variations.
Furthermore, in Example \ref{ex:poisson_bcs} we saw that setting appropriate boundary conditions 
for the Lagrange multipliers is a delicate subject and that the condition affects the performance if not done correctly.

Let Dirichlet conditions be applied on $\partial \Omega_{f, D}, \partial \Omega_{p, D}$,
and Neumann conditions on $\partial \Omega_{f, N}, \partial \Omega_{p, N}$, c.f. Figure~\ref{fig:DSdomains}.
Suppose also that
$\Gamma\cap \partial\Omega_{i, D}=\emptyset$, $i=p, f$ and
$\semi{\partial\Omega_{i, D}} > 0$.  We define
\begin{align*}
  \mathbf{V}_f =& \smuH,  \\
  Q_f =& \ismuL, \\
  \mathbf{V}_p =& \isKHd, \\
  Q_p =& \sKL, \\
  \Lambda =& \DSmultf \cap \DSmultpD.
\end{align*}

Following \cite{galvis2007non,layton2002coupling} the weak formulation of the
Darcy-Stokes problem \eqref{eq:darcy_stokes_strong} reads:
Find $(\uf, \up, p_f, p_p, \lambda)$ in $\mathbf{W}=\mathbf{V}_f\times \mathbf{V}_p\times Q_f\times Q_p \times \Lambda$ 
such that for all $(\vf, \vp) \in \mathbf{V}_f\times \mathbf{V}_p$ and
all $(q_f, q_p, w)\in Q_f\times Q_p\times \Lambda$
\begin{equation}
  \label{eq:darcy_stokes_weak}
\begin{aligned}
a((\uf, \up), (\vf, \vp)) + b((\vf, \vp), (p_f, p_p, \lambda)) &= f((\vf, \vp)),\\ 
  b((\uf, \up), (q_f, q_p, w)) &= g((q_f, q_p, w)),
\end{aligned}
\end{equation}
where
\begin{align*} 
  a((\uf, \up), (\vf, \vp)) &=  \mu (\nabla \uf, \nabla \vf)_{\Omega_f} + D (\tG\uf, \tG\vf)_\Gamma  + K^{-1} (\up, \vp)_{\Omega_p}, \\  
  b((\uf, \up), (q_f, q_p, w)) &=  (\nabla \cdot \uf, q_f)_{\Omega_f} + (\nabla \cdot \up, q_p)_{\Omega_p} +  (T_{n}\uf, w)_{\Gamma} -(T_{n}\up, w)_{\Gamma}, \\
  f((\vf, \vp)) &= (\ff, \vf)_{\Omega_f} + (\hf, \vf)_{\partial\Omega_{f, N}}+(\hp, \vp \cdot \boldsymbol{n})_{\partial\Omega_{p, N}}, \\
  g((q_f, q_p, w)) &= (\fp, q_p)_{\Omega_p}.
\end{align*}
We remark that the Lagrange multiplier $\lambda$ is defined as a normal
component of the traction force, i.e. $\lambda=p_p$ and by \eqref{eq:darcystokesstress} also
$\lambda=-\mu\tfrac{\partial \uf}{\partial\nG}\cdot\nG+p_f$.
The coefficient matrix of the left-hand side of \eqref{eq:darcy_stokes_weak} is 
\begin{align}
\label{coeff:darcy:stokes}
\AA =    \left( \begin{array}{cc|ccc}
     -\mu {\Delta} + D \tG\dual \tG  &   &-\nabla & & T_{ n}\dual \\ 
      & K^{-1} I  & & -\nabla & - T_{n}' \\ \hline
     \nabla \cdot&  &  & &  \\ 
     & \nabla \cdot  &  & &  \\ 
     T_{n} & -T_n &  & &   
     \end{array} \right).
\end{align}

In Example \ref{ex:darcy_wrong}, we saw that the efficiency of the preconditioning of the system
\eqref{eq:darcy_stokes_strong} varied substantially with the material parameters even
though the Stokes block and the Darcy block were preconditioned with  appropriate preconditioners.
We next demonstrate that robustness with respect to mesh resolution and variations in
material parameters can be obtained by choosing properly weighted fractional spaces for posing
the Lagrange multiplier. The preconditioner shall be of the form
{
\begin{align}
\label{eq:B_darcy_stokes}
\BB =    \left( \begin{array}{ccccc}
     -\mu {\Delta} + D \tG\dual \tG &   & & &  \\ 
      & K^{-1} \left(I - \nablab \nablab \cdot\right) &&& \\
      && \mu^{-1}I && \\
      &&& K I &  \\
      &&&& S
     \end{array}
     \right)^{-1},
\end{align}
}
where
\[
S = K(-\Delta+I)_{00}^{1/2} + \mu^{-1}(-\Delta+I)^{-1/2}.
\]

We remark that \eqref{eq:B_darcy_stokes}
is similar to the preconditioner proposed in Example \ref{ex:darcy_wrong} except for
the multiplier block where two fractional operators with different boundary conditions
and weighting by $\mu$ and $K$ form the Schur complement preconditioner at the interface. 

Before we analyze the preconditioner \eqref{eq:B_darcy_stokes} we consider its performance in 
Example \ref{ex:darcy_right} along with suitable assumptions in  
\ref{assumption:stokesreg} and \ref{assumption:darcyreg}. We start with boundary conditions
commonly met in practical applications rather than cases of homogeneous Dirichlet 
conditions that are often utilized for theoretical purposes. 

\begin{example}[Robust Darcy-Stokes preconditioner]\label{ex:darcy_right}
We consider \eqref{eq:darcy_stokes_strong} in case $\Gamma\cap \partial\Omega_{i, D}=\emptyset$, $i=p, f$ and
$\semi{\partial\Omega_{i, D}} > 0$, cf. Figure \ref{fig:DSdomains}. Using
\eqref{eq:B_darcy_stokes} and discretization in terms of \DSelm\ elements Figure 
\ref{fig:B_darcy_stokes} shows that the preconditioner is robust in discretization
parameter $h$ as well as variations in $\mu$, $K$ and $\BJS$.
\begin{figure}
  \centering
  \includegraphics[height=0.5\textwidth]{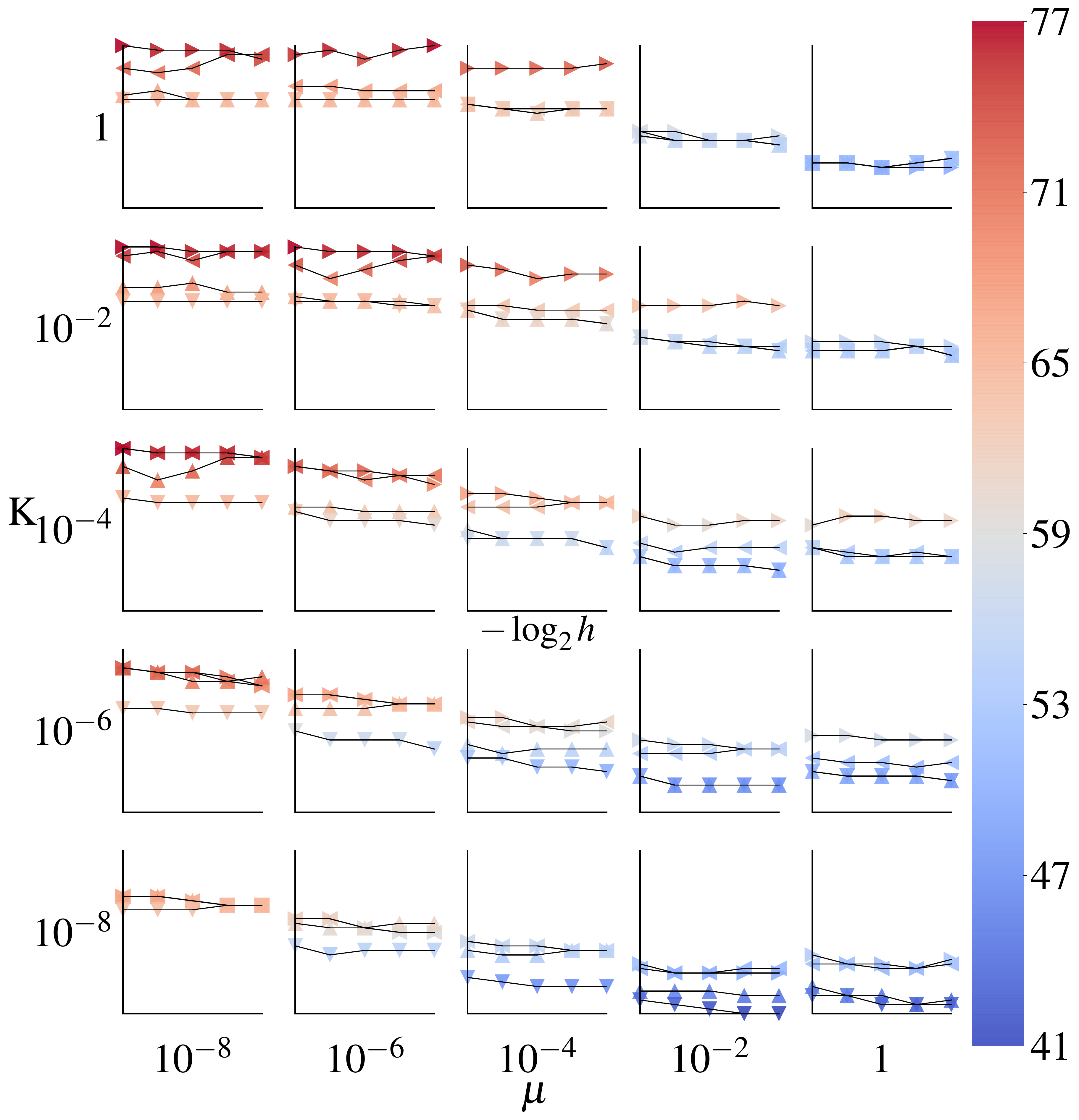}
  \includegraphics[height=0.5\textwidth]{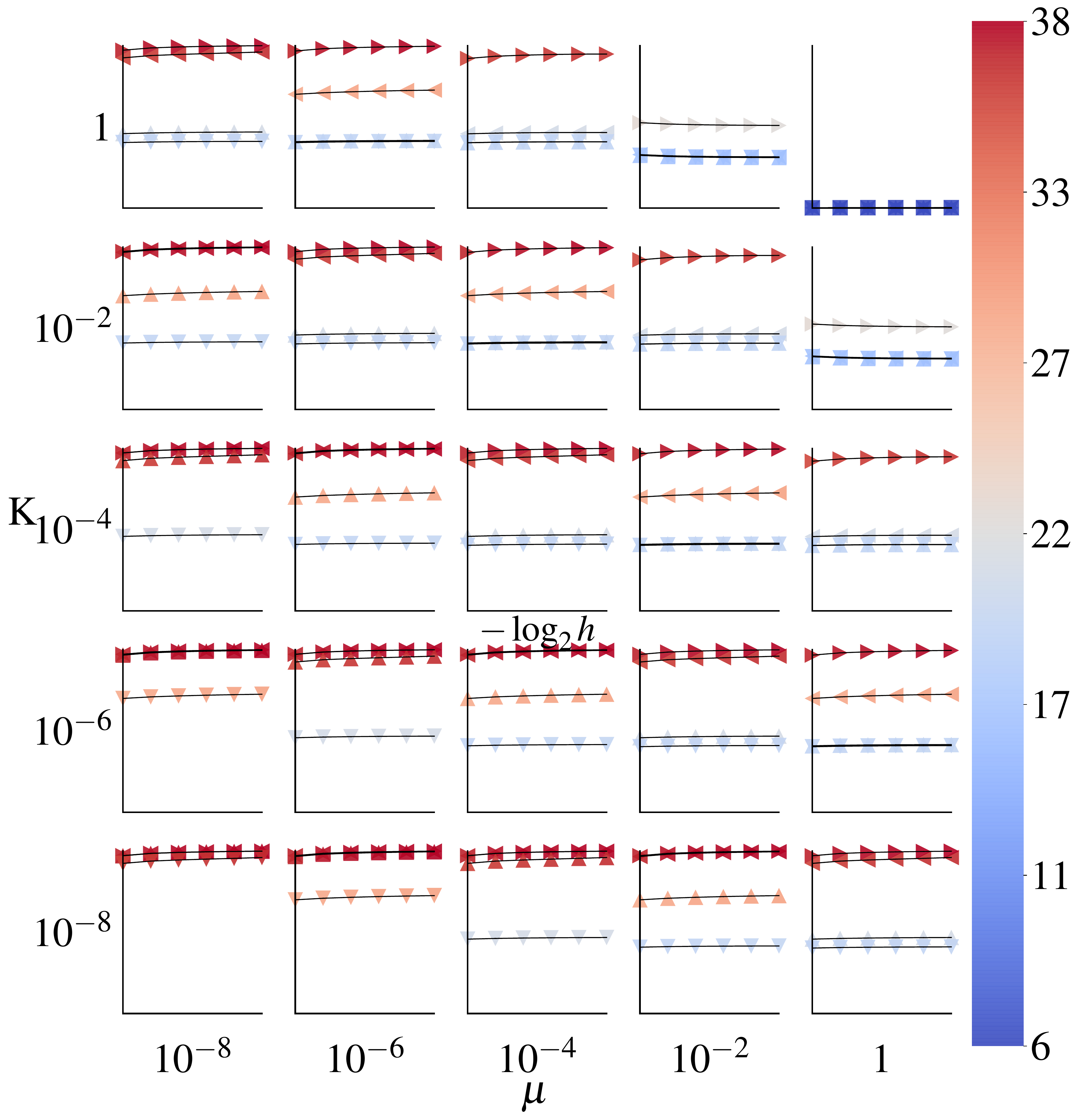}
    \vspace{-15pt}  
  \caption{Robust Darcy-Stokes preconditioner \eqref{eq:B_darcy_stokes}
in case $\Gamma\cap \partial\Omega_{i, D}=\emptyset$, $i=p, f$ and
$\semi{\partial\Omega_{i, D}} > 0$. (Left) Number of preconditioned MinRes iterations.
(Right) Spectral condition number of the preconditioner problem. The coarsest mesh for left plot has
$h=2^{-3}$ while $h=2^{-1}$ in the right plot. For fixed $K$, $\mu$ subplots the horizontal axis is scaled as $-log_2 h$ so that the system size grows from left to right. Values of $\alpha_{\text{BJS}}=10^{-6}, 10^{-4}, 10^{-2}, 1$ 
    are encoded with markers $\triangledown$, $\triangle$, $\triangleleft$, $\triangleright$.
  }
  \label{fig:B_darcy_stokes}
\end{figure}

The setup of the experiment and the solvers was summarized in the previous Remark
\ref{rmrk:setup}. Further, the case of homogeneous Dirichlet boundary conditions
is addressed in Remark \ref{rmrk:DS_DD}.
\end{example}

In order to apply Theorem \ref{thm:abstractcoupledprob} to prove that the preconditioner
\eqref{eq:B_darcy_stokes} is parameter robust, we require that the Stokes and Darcy subproblems
are individually well-posed in a specific form. The following assumptions specify the requirement.

\begin{assumption}[Stokes subproblem]\label{assumption:stokesreg}
  Let $\Omega_f\subset\reals^2$ be a bounded domain with boundary decomposition
  $\partial\Omega_f=\Gamma\cup\partial\Omega_{f, D}\cup\partial\Omega_{f, N}$ where the
  different parts are assumed to be of non-zero measure and $\partial\Omega_{f, D}\cap\Gamma=\emptyset$,
  cf. Figure \ref{fig:DSdomains}. 
  We consider the Stokes problem \eqref{eq:darcystokes1}-\eqref{eq:darcystokes2} with
  $\uf^0=\la{0}$, $\hf=\la{0}$ and the boundary conditions
  \begin{subequations}
    \begin{align}
      \label{eq:bab_stokes_n}
      \uf \cdot \nG &= g_n &\mbox{ on }\Gamma,\\
      \label{eq:bab_stokes_t}
  \boldsymbol{\tau}\cdot(\mu\nabla\uf-p_f I)\cdot \nG + \uf \cdot \boldsymbol{\tau} &= 0 &\mbox{ on }\Gamma,
\end{align}
  \end{subequations}
  where \eqref{eq:bab_stokes_n} shall be enforced by Lagrange multiplier.
  Let $\mathbf{W}=\mathbf{V}\times Q \times \Lambda$ with 
  $\mathbf{V} = \smuH$, $Q=\mu^{-\half}L^2(\Omega_f)$, $\Lambda=\mu^{-\half}H^{-\half}(\Gamma)$ and $\ff \in \mathbf{V}\dual$, $g_n\in \Lambda\dual$.
  We define $a:\mathbf{W} \times \mathbf{W}\rightarrow\reals$ and $L:\mathbf{W}\rightarrow\reals$
  as 
\begin{equation}\label{eq:bab_stokes_op}
   \begin{aligned} 
     a((\uf, p_f, \lambda), (\vf, q_f, w)) =& \mu (\nabla \uf, \nabla \vf) + (\tG\uf, \tG\vf)_{\Gamma} + (p_f, \nabla \cdot \vf)  + \\
                                           &+ (\nabla \cdot \uf, q_f)
                                           +(\lambda, T_n\vf)_{\Gamma} + (w, T_n\uf)_{\Gamma},\\
      L((\vf, q_f, w)) =& (\ff, \vf) + (g_n, w).
   \end{aligned}
\end{equation}
We assume that the Brezzi conditions are met by \eqref{eq:bab_stokes_op} 
so that the problem: Find $(\uf, p_f, \lambda)\in \mathbf{W}$ such that
\[
a((\uf, p_f, \lambda), (\vf, q_f, w)) = L((\vf, q_f, w)),\quad\forall(\vf, q_f, w) \in \mathbf{W}
\]
is well-posed. In particular, the Brezzi conditions ensure the following stability estimate that will be used later
\begin{equation}\label{eq:stokes_estimate}
\| (\uf, p_f, \lambda)\|_{\mathbf{W}} \leq C\left ( \|\ff\|_{\mathbf{V}^{\prime}} + \|g_n\|_{\Lambda\prime}  \right ).
\end{equation}
Here the constant $C$ is independent of the parameters.
\end{assumption}

We remark that the condition \eqref{eq:bab_stokes_t} is a special case of the
Beavers-Joseph-Saffmann condition, with $D=1$. We consider this simplification
as the results of Example \ref{ex:darcy_right} show that sensitivity of the coupled
problem to variations of $D$ is small.

\begin{assumption}[Darcy subproblem]\label{assumption:darcyreg}
  Let $\Omega_p\subset\reals^2$ be a bounded domain with boundary decomposition
  $\partial\Omega_p=\Gamma\cup\partial\Omega_{p, D}\cup\partial\Omega_{p, N}$ where the
  components are assumed to be of non-zero measure and $\partial\Omega_{p, D}\cap\Gamma=\emptyset$. 
  We consider the Darcy problem \eqref{eq:darcystokes3}-\eqref{eq:darcystokes4} with
  $u_p^0=0$, $h_p=0$ and the boundary condition
  \[
      \up \cdot \nG = g_n \mbox{ on }\Gamma,\\
  \]
  which shall be enforced by Lagrange multiplier.

  Let $\mathbf{V} = \isKHd$, $Q=\sKL$, $\Lambda=\sqrt{K}H^{\half}_{00}(\Gamma)$  and $\mathbf{W}=\mathbf{V}\times Q\times \Lambda$. 
  Further let $\fp \in Q\dual$,  $g_n\in \Lambda\dual$ and let us define
  
  \begin{equation}\label{eq:bab_darcy_op}
    \begin{aligned}
      a((\up, p_p, \lambda), (\vp, q_p, w)) =& K^{-1} (\up, \vp) + (p, \nabla \cdot \vp)  + \\
      &+(\nabla \cdot \up, q)  + (T_n \up, w)_{\Gamma} + (\lambda, T_n\vp)_{\Gamma},\\
L((\vp, q_p, w)) =& (f, q_p) + (g_n, w)_{\Gamma}.
    \end{aligned}
\end{equation}  
  We assume that \eqref{eq:bab_darcy_op} satisfies the Brezzi conditions such that the problem:
  Find $(\up, p_p, \lambda)\in \mathbf{W}$ such that
  \[
  a((\up, p_p, \lambda), (\vp, q_p, w)) = L((\vp, q_p, w)),\quad\forall (\vp, q_p, w)\in \mathbf{W}
  \]
  is well-posed. In particular, the Brezzi conditions ensure the following stability 
  estimate that will be used later
  \begin{equation}\label{eq:darcy_estimate}
  \|(\up, p_p, \lambda)\|_{\mathbf{W}} \leq C\left ( \|\fp\|_{Q\dual} +  \|g_n\|_{\Lambda\dual} \right )
  \end{equation}
  with $C$ independent of $K$.
\end{assumption}

We shall not prove Assumptions \ref{assumption:stokesreg} and \ref{assumption:darcyreg}. 
However, Examples \ref{ex:stokesreg} and \ref{ex:darcyreg} will provide numerical evidence in their 
support. In particular, the experiments show that the condition numbers of the discretized
systems do not vary significantly with discretization or material parameters if
preconditioned with the norms of the assumption.

\begin{example}[Demonstration of Assumption \ref{assumption:stokesreg}]\label{ex:stokesreg}
  Let $\Omega_f=\left[0, 1\right]^2$  with $\Gamma=\set{(x, y)\in\partial{\Omega_f}\,|\,x=0}$ and 
    $\partial\Omega_{f, D}=\set{(x, y)\in\partial{\Omega_f}\,|\,x=1}$. We demonstrate that
      Assumption \ref{assumption:stokesreg} holds by considering the spectra of the
      preconditioned problem $\mathcal{A}x=\beta\mathcal{B}^{-1}x$ where $\mathcal{A}$
      is the operator due to the bilinear form in \eqref{eq:bab_stokes_op} and $\mathcal{B}$
      is the Riesz map preconditioner induced by the space $\mathbf{W}$, i.e.
      \begin{equation}\label{eq:stokes_example}
        \begin{aligned}
    \mathcal{A} &= \begin{pmatrix}
        -\mu{\Delta} + \tG\dual\tG & -\nabla & T_n\dual\\
        \nabla\cdot & &\\
        T_n & &\\
    \end{pmatrix},\\
\mathcal{B} &= \begin{pmatrix}
      -\mu{\Delta} + \tG\dual\tG &  &\\
    & \mu^{-1}I &\\
    & & \mu^{-1}{(-\Delta + I)}^{-1/2}
\end{pmatrix}^{-1}.
\end{aligned}
\end{equation}
In order to illustrate the importance of the boundary conditions we shall in addition consider
the preconditioner $\mathcal{B}_{00}$ which differs from \eqref{eq:stokes_example}
by using $\mu^{-1}{(-\Delta + I)}_{00}^{-1/2}$ for the multiplier block.
      
\begin{minipage}{0.45\textwidth}
      \centering
      \footnotesize{
      \begin{tabular}{c|ccc||c}
        \hline
        \multirow{2}{*}{$h$} & \multicolumn{3}{c||}{$\mathcal{B}\mathcal{A}$} & {$\mathcal{B}_{00}\mathcal{A}$}\\
        \cline{2-5}
        & $\mu=1$ & $10^{-4}$ & $10^{-8}$ & $1$\\
        \hline
$2^{-1}$ & 10.19 & 13.45 & 13.46 & 9.29  \\
$2^{-2}$ & 10.17 & 13.41 & 13.41 & 10.21 \\
$2^{-3}$ & 10.17 & 13.40 & 13.40 & 11.06 \\
$2^{-4}$ & 10.17 & 13.39 & 13.39 & 11.92 \\
$2^{-5}$ & 10.17 & 13.39 & 13.39 & 12.80 \\
$2^{-6}$ & 10.17 & 13.39 & 13.39 & 13.71 \\
$2^{-7}$ & 10.17 & 13.39 & 13.39 & 14.64 \\
\hline
      \end{tabular}
        }
      \captionof{table}{Spectral condition numbers of preconditioned Stokes
        problem \eqref{eq:bab_stokes_op}. $\mathcal{B}$ is robust in $h$ and $\mu$.
        Results with $\mathcal{B}_{00}$ show
        that $H_{00}^{-1/2}$ is not suitable if $\Gamma\cap\partial\Omega_{f, D}=\emptyset$.}
  \label{tab:stokesreg}  
\end{minipage}
\begin{minipage}{0.49\textwidth}
  \begin{figure}[H]
      \centering        
\includegraphics[width=\textwidth]{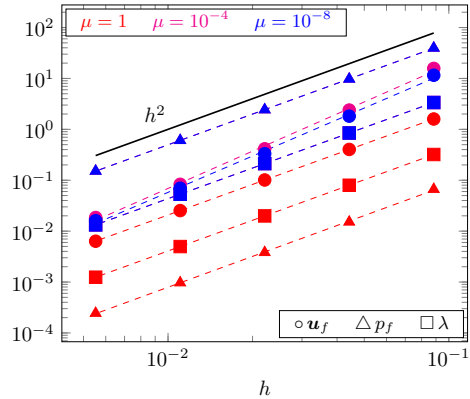}
      \vspace{-25pt}
      \caption{Approximation errors of Stokes problem \eqref{eq:bab_stokes_op}
        measured in norm due to $\mathcal{B}^{-1}$. Discretization by \Selm elements.}
      \label{fig:stokesreg}
  \end{figure}
\end{minipage}
\vspace{0.2cm}
  
  Table \ref{tab:stokesreg} lists the condition numbers of the preconditioned Stokes system
  discretized by \Selm\ elements for different values of $\mu$. The results
  are bounded,  indicating that the Brezzi conditions \eqref{absbrezzi1}-\eqref{absbrezzi4}
  are satisfied. It can also be seen that this is not the case if $\Lambda=H_{00}^{-\half}(\Gamma)$.
  The bound \eqref{eq:stokes_estimate} is verified in Figure \ref{fig:stokesreg}.
\end{example}

\begin{example}[Demonstration of Assumption \ref{assumption:darcyreg}]\label{ex:darcyreg}
  Let $\Omega_p=\left[0, 1\right]^2$  with $\Gamma=\set{(x, y)\in\partial{\Omega_p}\,|\,x=0}$.
  $\partial\Omega_{p, D}=\set{(x, y)\in\partial{\Omega_p}\,|\,x=1}$.
  As in Example \ref{ex:stokesreg} the Assumption \ref{assumption:darcyreg} is demonstrated
  via the spectrum of the discrete preconditioned problem $\mathcal{A}x=\beta\mathcal{B}_{00}^{-1}x$ where
  $\mathcal{A}$ induced the bilinear form in \eqref{eq:bab_darcy_op} and $\mathcal{B}_{00}$
  is the Riesz map preconditioner with respect to the norms of $\mathbf{W}$, i.e.
  \begin{equation}\label{eq:darcy_example}
    \mathcal{A} = \begin{pmatrix}
        K^{-1}I & -\nabla & T_n\dual\\
        \nabla\cdot & &\\
        T & &\\
    \end{pmatrix},
    \,
    \mathcal{B}_{00} = \begin{pmatrix}
    K^{-1}(I - \nablab\nablab\cdot) &  &\\
    & K I &\\
    & & K{(-\Delta + I)}_{00}^{1/2}
\end{pmatrix}^{-1}.    
  \end{equation}
  Operator $\mathcal{B}$ then differs from $\mathcal{B}_{00}$ by using $K(-\Delta + I)^{1/2}$
  in the multiplier block.\\
  \begin{minipage}{0.95\textwidth}
    \begin{minipage}{0.49\textwidth}
      \centering
      \footnotesize{
      \begin{tabular}{c|ccc||c}
        \hline
        \multirow{2}{*}{$h$} & \multicolumn{3}{c||}{$\mathcal{B}_{00}\mathcal{A}$} & {$\mathcal{B}\mathcal{A}$}\\
        \cline{2-5}
        & $K=1$ & $10^{-4}$ & $10^{-8}$ & $1$\\
        \hline
$2^{-1}$ & 3.47 & 3.47 & 3.47 & 4.92\\
$2^{-2}$ & 3.52 & 3.52 & 3.52 & 5.55\\
$2^{-3}$ & 3.53 & 3.53 & 3.53 & 6.14\\
$2^{-4}$ & 3.54 & 3.54 & 3.54 & 6.71\\
$2^{-5}$ & 3.54 & 3.54 & 3.54 & 7.28\\
$2^{-6}$ & 3.54 & 3.54 & 3.54 & 7.86\\
$2^{-7}$ & 3.54 & 3.54 & 3.54 & 8.43\\
\hline
      \end{tabular}
        }
      \captionof{table}{Spectral condition numbers of preconditioned Darcy problem \eqref{eq:bab_darcy_op}. $\mathcal{B}_{00}$ is robust in $h$ and $K$.
        Results for $\mathcal{B}$ show that $H^{1/2}$ is not suitable if
        $\Gamma\cap\partial\Omega_{p, D}=\emptyset$.        
  }
  \label{tab:darcyreg}  
    \end{minipage}
    \begin{minipage}{0.49\textwidth}
      \begin{figure}[H]
      \centering
      \includegraphics[width=\textwidth]{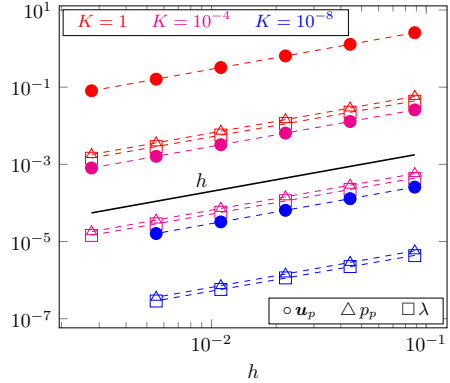}
      \vspace{-25pt}      
      \caption{Approximation errors of Darcy problem \eqref{eq:bab_darcy_op}
        measured in norm due to $\mathcal{B}_{00}^{-1}$. Discretization by \Delm elements.}
        \label{fig:darcyreg}        
      \end{figure}
    \end{minipage}
  \end{minipage}
\vspace{0.2cm}

  Table \ref{tab:darcyreg} shows the condition numbers
  of the preconditioned Darcy problem discretized by \Delm\   with different values of $K$. We observe that the condition
  numbers are practically constant with respect to $K$ and $h$. Moreover, the fact that $\mathcal{B}$
  leads to unbounded spectra shows that Darcy problem \eqref{eq:bab_darcy_op} is not well-posed
  with $\Lambda=H^{1/2}(\Gamma)$. Finally, the estimate \eqref{eq:darcy_estimate} is verified in
  Figure \ref{fig:darcyreg}.
\end{example}

We remark the quadratic, respectively linear convergence for the velocities and pressures
in the Stokes and Darcy subproblems, cf. Figure \ref{fig:stokesreg}, \ref{fig:darcyreg}, is
in agreement with the well-known theory for the approximation by Taylor-Hood and stable mixed-Poisson
elements. The stability of the rates with respect to parameter variations then provides
evidence for estimates \eqref{eq:stokes_estimate} and \eqref{eq:darcy_estimate}.

Following Examples \ref{ex:stokesreg} and \ref{ex:darcyreg} the well-posedness
of the coupled Darcy-Stokes problem is proved in Theorem \ref{thm:DSwellposedness}.
\begin{theorem} \label{thm:DSwellposedness}
  Let $\partial \Omega_i = \Gamma \cup \partial \Omega_{i, D} \cup \partial \Omega_{i, N}$, $i=p, f$
  such that $\semi{\Omega_{i, N}} > 0$, $\semi{\Omega_{i, D}} > 0$ and $\Gamma\cap\partial\Omega_{i, D}=\emptyset$.
  Further let
  \begin{equation*}
    \begin{split}
      \mathbf{W} = &\smuH \times \isKHd \times \ismuL \\
                   &\times \sKL \times \left(\tfrac{1}{\sqrt{\mu}}H^{-\half}(\Gamma)\cap \sqrt{K}H_{00}^{\half}(\Gamma)\right).
    \end{split}
  \end{equation*}
Then if Assumptions \ref{assumption:stokesreg} and \ref{assumption:darcyreg} hold, the Darcy-Stokes operator $\AA$ in \eqref{coeff:darcy:stokes} is an isomorphism mapping 
$\mathbf{W}$ to $\mathbf{W}'$ such that 
$\|\AA\|_{\mathcal{L}(\mathbf{W},\mathbf{W}')} \le C$ and 
$\|\AA^{-1}\|_{\mathcal{L}(\mathbf{W}',\mathbf{W})} \le C^{-1}$ 
where $C$ is independent of $\mu$, $K$, and $D$.
\end{theorem}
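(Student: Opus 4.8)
The plan is to apply the abstract well-posedness result, Theorem \ref{thm:abstractcoupledprob}, to the Darcy--Stokes system by identifying the two subproblems \eqref{eq:abstractsubprob} with the Stokes problem of Assumption \ref{assumption:stokesreg} and the Darcy problem of Assumption \ref{assumption:darcyreg}, and then checking that the coupling at $\Gamma$ has exactly the structure required by \eqref{eq:abstractcoupledprob}. Concretely, I would set $V_1 = \mathbf{V}_f = \smuH$, $Q_1 = Q_f = \ismuL$, $\Lambda_1 = \tfrac{1}{\sqrt\mu}H^{-\half}(\Gamma)$ for the Stokes side, and $V_2 = \mathbf{V}_p = \isKHd$, $Q_2 = Q_p = \sKL$, $\Lambda_2 = \sqrt{K}H_{00}^{\half}(\Gamma)$ for the Darcy side. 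The trace operators $T_1 = T_n$ acting on $\uf$ and $T_2 = T_n$ acting on $\up$ (with the sign as in \eqref{coeff:darcy:stokes}) are the $T_i$ of the abstract framework, and the block form of $\AA$ in \eqref{coeff:darcy:stokes} matches \eqref{eq:abstractcoupledprob} once the sign of $\up$'s trace is absorbed, with the single shared multiplier $\lambda$ living in $\Lambda_1 \cap \Lambda_2 = \DSmultf \cap \DSmultpD$, precisely the multiplier space appearing in $\mathbf{W}$.

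The verification then reduces to checking the hypotheses of Theorem \ref{thm:abstractcoupledprob}: the Brezzi conditions \eqref{absbrezzi1}--\eqref{absbrezzi4} for each subproblem in $V_i \times Q_i \times \Lambda_i$, and the stronger coercivity \eqref{absbrezzi1kerB} on $\ker B_i$. The first of these is exactly what Assumptions \ref{assumption:stokesreg} and \ref{assumption:darcyreg} grant us — by hypothesis the bilinear forms \eqref{eq:bab_stokes_op} and \eqref{eq:bab_darcy_op} are well-posed on their respective $\mathbf{W}$'s, which is equivalent to the four Brezzi conditions holding with parameter-independent constants, yielding the stability estimates \eqref{eq:stokes_estimate} and \eqref{eq:darcy_estimate}. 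For \eqref{absbrezzi1kerB} I would argue directly: on the Stokes side, $A_1 = -\mu\Delta + D\tG\dual\tG$ restricted to divergence-free $\uf \in \smuH$ is coercive because $\mu\|\nabla\uf\|_{\Omega_f}^2 + D\|\tG\uf\|_\Gamma^2$ controls the full $\mathbf{V}_f$ norm via a Poincar\'e-type inequality using $\semi{\partial\Omega_{f, D}} > 0$; on the Darcy side $A_2 = K^{-1}I$ is trivially coercive on all of $\mathbf{V}_p$ in the weighted $L^2$ part, while coercivity on the full $\isKHd$ norm on $\ker B_2$ follows since divergence-free $\HdivD{D}{\Omega_p}$ functions have their $\Hdiv{}$ norm equal to their $L^2$ norm. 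One should also double-check the mild technical point from Remark \ref{rmk:tracedirichletbdy} that, since $\partial\Gamma \not\subset \partial\Omega_{i, D}$, the normal trace $T_n\up$ indeed lands in $H_{00}^{-\half}(\Gamma)$ and $T_n\uf$ in $H^{-\half}(\Gamma)$, so that the $b$-form pairings in \eqref{eq:darcy_stokes_weak} are well-defined and the duality $(\Lambda_1 \cap \Lambda_2)\dual = \Lambda_1\dual + \Lambda_2\dual$ is the correct setting for the data $h$.

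Once all hypotheses are in place, Theorem \ref{thm:abstractcoupledprob} gives the boundedness of $\AA$ and $\AA^{-1}$ as maps $\mathbf{W} \to \mathbf{W}\dual$ with constant depending only on the Brezzi constants of the subproblems, which by Assumptions \ref{assumption:stokesreg}--\ref{assumption:darcyreg} are independent of $\mu$, $K$, and $D$; this is exactly the claimed bound. I expect the main obstacle to be not any single hard estimate but the bookkeeping of matching the Darcy--Stokes weak form \eqref{eq:darcy_stokes_weak}, with its sign conventions and the two distinct boundary-condition flavors ($H^{-1/2}$ versus $H_{00}^{1/2}$) on the two traces, to the abstract template \eqref{eq:abstractcoupledprob} — in particular confirming that the coercivity assumption \eqref{absbrezzi1kerB}, rather than merely the weaker \eqref{absbrezzi1}, holds for the Stokes block, since $A_1$ must be coercive on the full divergence-free subspace of $\mathbf{V}_f$ and not just on the intersection $Z_1$ with the kernel of the trace coupling. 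Since $D = \alpha_{\text{BJS}}\sqrt{\mu/K}$ can be small, the coercivity must come from the viscous term alone plus the Dirichlet boundary, which is where the assumption $\semi{\partial\Omega_{f, D}} > 0$ is essential.
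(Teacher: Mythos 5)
Your proposal is correct and follows essentially the same route as the paper: invoke Assumptions \ref{assumption:stokesreg} and \ref{assumption:darcyreg} for the Brezzi conditions of the subproblems, verify the stronger coercivity \eqref{absbrezzi1kerB} directly (Poincar\'e with $\semi{\partial\Omega_{f,D}}>0$ for Stokes, and the identity of the $\Hdiv{\Omega_p}$ and $\la{L}^2$ norms on divergence-free fields for Darcy), and then apply Theorem \ref{thm:abstractcoupledprob}. The paper's proof is exactly this, with the same identification of $\Lambda_1\cap\Lambda_2$ as the multiplier space.
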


\begin{proof}[Proof of Theorem \ref{thm:DSwellposedness}] 
  By Assumption \ref{assumption:stokesreg} and \ref{assumption:darcyreg} the
  Brezzi conditions \eqref{absbrezzi1}-\eqref{absbrezzi2} hold for the Stokes and
  Darcy subproblems. In order to apply Theorem \ref{thm:abstractcoupledprob}, it remains to show
  the coercivity conditions \eqref{absbrezzi1kerB}. First, consider the Stokes subproblem.
  Because $\semi{\partial\Omega_{f, D}} > 0$, by the Poincar{\'e} inequality there exists a
  constant $C_f > 0$ depending only on the domain so that
  $\|\nabla \uf \|^2_{L^2(\Omega_f)} \geq C_f \|\uf \|^2_{\la{H}^1(\Omega_f)}$ for any $\uf \in \la{H}^1_{0, D}(\Omega_f)$. Hence
  \begin{align*}
    \mu (\nabla \uf, \nabla \uf) + (\tG\uf, \tG\uf)_{\Gamma} &= \mu \|\nabla \uf \|^2_{\la{L}^2(\Omega_f)} + D \|\uf \|^2_{\la{L}_{t}^2(\Gamma)} \\
                                                           & \geq \left (C_f\mu \|\uf \|^2_{\la{H}^1(\Omega_f)} + D \|\uf \|^2_{\la{L}_{t}^2(\Gamma)}  \right ) \\
    & = \min{(1, C_f)}\left (\|\uf \|^2_{\sqrt{\mu} \la{H}^1(\Omega_f)} + \|\uf \|^2_{\sqrt{D}\la{L}_{t}^2(\Gamma)}  \right ) \\
    & = \min{(1, C_f)} \|\uf\|^2_{\smuH}.
  \end{align*}
  The coercivity condition thus holds with constant $\min{(1, C_f)}$. Next,
  consider the Darcy subproblem. Here the coercivity on $\ker B_p$ follows by definition.
  Indeed for any $\up\in\ker B_p$ we have $\nabla\cdot\up = 0$ and 
  \[
  K^{-1}(\up, \up) = \|\up\|^2_{\tfrac{1}{\sqrt{K}} \la{L}^2(\Omega_p)} =\|\up\|^2_{\tfrac{1}{\sqrt{K}}\la{H}_{0, D}(\text{div}, \Omega_p)}.
  \]
  
Hence we have established condition \eqref{absbrezzi1kerB}. The assumptions of Theorem \ref{thm:abstractcoupledprob} are thus all satisfied, showing well-posedness as desired. 
\end{proof}



\begin{remark}[Homogeneous Dirichlet conditions]\label{rmrk:DS_DD}
  In Example \ref{ex:darcy_right}, Theorem \ref{thm:DSwellposedness} we showed that the preconditioner was robust in
  case where only the Neumann boundaries of both problems are intersected by the interface.
On the other hand, the Darcy-Stokes problem with $\semi{\partial\Omega_{i, N}}=0$ has been shown
well-posed by \cite{layton2002coupling,galvis2007non}. From the point of
view of abstract Theorem \ref{thm:abstractcoupledprob} the case is thus
interesting as the analogues of Assumptions \ref{assumption:stokesreg} and \ref{assumption:darcyreg}
have been established in~\cite{galvis2007non}.

With the structure of the multiplier space hinted at in Examples \ref{ex:poisson_bcs}, \ref{ex:stokesreg}
and \ref{ex:darcyreg} the difficulty of the homogeneous Dirichlet conditions is the
fact that the operator \eqref{coeff:darcy:stokes} is singular with a kernel
$z=(\boldsymbol{0}, \boldsymbol{0}, 1, 1, 1)$. Let for
simplicity all material parameters be unity, so that
\[
\mathbf{W} = \mathbf{H}^1_{0,D}(\Omega_f)\times \HdivD{D}{\Omega_p} \times L^2(\Omega_f)\times L^2(\Omega_p)\times H^{1/2}(\Gamma),
\]
and assume a compatible right-hand side, i.e. $(1, f_p)_{\Omega_p}=0$. Then \cite{galvis2007non} proves
well-posedness of \eqref{eq:darcy_stokes_weak} in $\mathbf{W}^{\perp}=\set{w\in \mathbf{W}\,|\,(w, z)=0}$.
However, this constraint is impractical in our setting as it enforces additional
structure on the multiplier space namely, $H^{\half}\cap H_{00}^{-\half}\cap L^2/\reals$.
Instead, we have found it convenient to normalize the Stokes pressure. That is, first a solution with 
$p_f\in L^2(\Omega_f)/\reals$ is found and afterwards we renormalize as
$p_f = p_f - (1, p_p)_{\Omega_f} - (1, \lambda)_{\Gamma}$.
The first step is thus a Darcy-Stokes problem formulated in
\begin{equation}\label{eq:DS_DD}
  \begin{split}
    \mathbf{W} = \smuH \times \isKHd \times \ismuL \\
    \times \sKL \times \left(\tfrac{1}{\sqrt{\mu}}H_{00}^{-\half}(\Gamma)\cap \sqrt{K}H^{\half}(\Gamma)\right)\times\mu\reals,
  \end{split}
\end{equation}  
where the additional unknown enforces $p_f\in L^2(\Omega_f)/\reals$. We remark
that its $\mu$ scaling is necessary for parameter independence.

The Riesz map preconditioner of \eqref{eq:DS_DD} does not interfere with the Lagrange multiplier
and can be implemented using the same solvers as \eqref{eq:B_darcy_stokes}. Experiments
demonstrating robustness of the preconditioner are summarized in Figure \ref{fig:DS_DD}.
\begin{figure}
  \centering
  \includegraphics[height=0.5\textwidth]{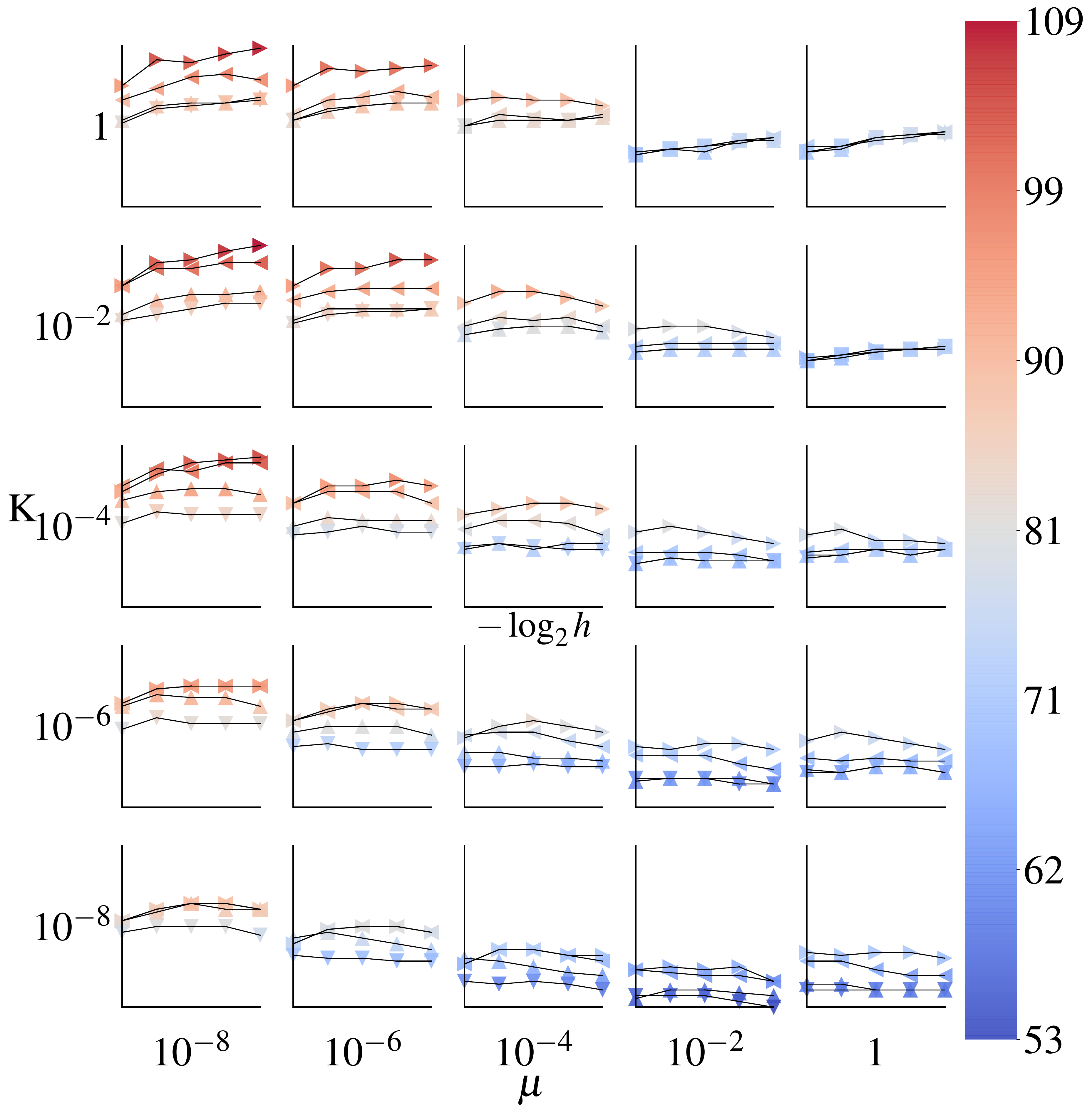}
  \includegraphics[height=0.5\textwidth]{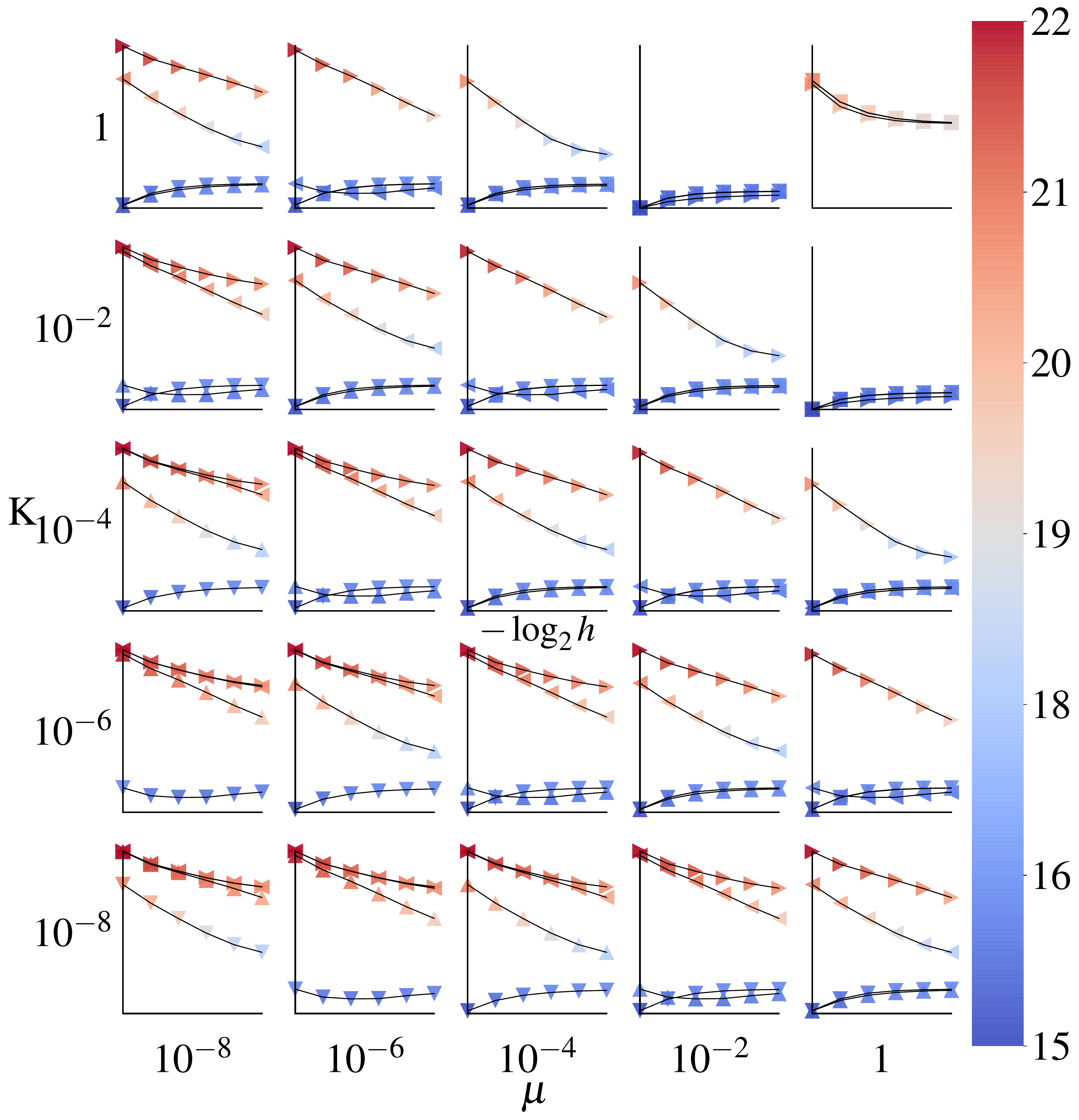}
  \vspace{-15pt}
  \caption{Darcy-Stokes problem with homogeneous Dirichlet boundary conditions
    preconditioned by Riesz map preconditioner of \eqref{eq:DS_DD}.
(Left) Number of preconditioned MinRes iterations.
(Right) Spectral condition number. 
The coarsest mesh for left plot has
$h=2^{-3}$ while $h=2^{-1}$ in the right plot. For fixed $K$, $\mu$ subplots the horizontal axis is scaled as $-log_2 h$ so that the system size grows from left to right. Values of $\alpha_{\text{BJS}}=10^{-6}, 10^{-4}, 10^{-2}, 1$ 
    are encoded with markers $\triangledown$, $\triangle$, $\triangleleft$, $\triangleright$.
}
 \label{fig:DS_DD}
\end{figure}
\end{remark}

\section{Robust preconditioners for the Stokes-Navier system}\label{sec:stokes-elasticity}
Let $\Omega_f$, $\Omega_p$ be as in \S\ref{sec:intro}. We consider a model problem describing the
interaction of a viscous fluid occupying domain $\Omega_f$ with a linear solid $\Omega_p$ undergoing
small elastic deformations. Let $L_i$, $i=p,f, I$ be the Lagrangians
\[
\begin{aligned}
  L_f &= (2\mu \Epsilon(\uf), \Epsilon(\uf))_{\Omega_f} -(p_f, \nabla\cdot\uf)_{\Omega_f} -(\boldsymbol{f}_f, \uf)_{\Omega_f} - (\boldsymbol{h}_f, \uf)_{\partial\Omega_{f, N}},\\
  L_p &= (2\nu \Epsilon(\up), \Epsilon(\up))_{\Omega_p} -(p_p, \nabla\cdot\up)_{\Omega_p} - (\eta^{-1}p_p, p_p)_{\Omega_p}-(\boldsymbol{f}_p, \up)_{\Omega_p}\\
      & -(\boldsymbol{h}_p, \up)_{\partial\Omega_{p, N}},\\
  L_I &= (\boldsymbol{\lambda}, k\uf - \up - \boldsymbol{g})_{\Gamma},
\end{aligned}
\]
on the respective subdomains. Here, $\Epsilon(\boldsymbol{v})=\tfrac{1}{2}(\nabla\boldsymbol{v}+\nabla\boldsymbol{v}^T)$
and the material parameters of the model are fluid viscosity $\mu$ and Lam{\' e} constants $\nu$, $\eta$.
The Lagrangian of the coupled problem then reads $L=L_p+L_f+L_I$. 

The coupling between the Stokes and the Navier equations consists
of two conditions. The balance of normal stress 
\[
    \bsigma_f(\uf, p_f)\cdot \nG - \bsigma_p(\up, p_p)\cdot \nG = 0 \text{ on }\Gamma
\]
with $\bsigma_f(\uf, p_f) = 2\mu\Epsilon(\uf) -p_f\la{I}$, 
$\bsigma_p(\up, p_p) = 2\nu\Epsilon(\up) + p_p\la{I}$ is enforced weakly
 while a (vector valued) Lagrange multiplier enforces continuity of motion, i.e.   
$0=\tfrac{\partial L}{\partial\boldsymbol{\lambda}}$. The condition 
is a temporal discretization of the kinematic interface condition
stating that the fluid and solid velocity on $\Gamma$ should be equal,
c.f. e.g. \cite{gerstenberger2008extended}. Note that in general $\boldsymbol{g}_{\Gamma}\neq \mathbf{0}$ 
because $\boldsymbol{u}_p$ represents a displacement in the solid domain $\la{u}_f$ is a velocity in the fluid domain. Hence, 
$0=\tfrac{\partial L}{\partial\boldsymbol{\lambda}}$ expresses time-stepping in the solid domain and 
$\boldsymbol{g}$ will be the displacement on the previous time-step. Therefore, 
 $\boldsymbol{u}|_{\Omega_i}=\boldsymbol{u}_i$, $i=p, f$ is not continuous on $\Gamma$.
Finally, the boundary terms in the Lagrangian reflect Neumann boundary conditions while on the
remaining part Dirichlet data are assumed:
\[
\uf = \boldsymbol{u}^0_f\mbox{ on }\partial\Omega_{f, D},\quad
\up = \boldsymbol{u}^0_p\mbox{ on }\partial\Omega_{p, D}
\]
and
\[
\bsigma_f(\uf, p_f)\cdot\nG = \boldsymbol{h}_f\mbox{ on }\partial\Omega_{f, N},\quad
\bsigma_p(\up, p_p)\cdot\nG = \boldsymbol{h}_p\mbox{ on }\partial\Omega_{p, N}.
\]

We remark that in $L_p$ we consider the mixed-formulation of linear elasticity as
our main interest is in the nearly incompressible regime 
(that is $\eta\to\infty$) in which the displacement formulation is known to suffer
from locking \cite{klawonn1998block}. In the following we derive preconditioners
for the coupled problem for the case $\eta\gg 1$, $k\leq 1$ and $\mu>0$, $\nu>0$. However,
we shall focus on the more challenging case $0 < \mu \leq 1$. Further, as $\nu$ is practical
for rescaling, e.g \cite{hong2017parameter}, we let $\nu=1$ and 
only the robustness with respect to $k$, $\mu$ and $\eta$ shall be addressed further.

Letting
  $\mathbf{W}=\la{H}^1_{0, D}(\Omega_f)\times \la{H}^1_{0, D}(\Omega_p)\times L^2{(\Omega_f)}\times L^2{(\Omega_p)}\times \la{H}^{-1/2}(\Gamma)$
the extremal points $(\uf, \up, p_f, p_p, \boldsymbol{\lambda})\in \la{W}$ of $L$ satisfy
\begin{equation}\label{eq:elast_stokes_weak}
\begin{aligned}
a((\uf, \up), (\vf, \vp)) + b((\vf, \vp), (p_f, p_p, \boldsymbol{\lambda})) &= f((\vf, \vp)),\\
b((\uf, \up), (q_f, q_p, \boldsymbol{w})) -\eta^{-1} (p_p, q_p)_{\Omega_p} &= g((q_f, q_p, \boldsymbol{w}))
\end{aligned}
\end{equation}
for all $(\vf, \vp)\in \mathbf{H}^1_{0, D}(\Omega_f)\times \mathbf{H}^1_{0, D}(\Omega_p)$
and all $(q_f, q_p, \boldsymbol{w})\in L^2{(\Omega_f)}\times L^2{(\Omega_p)}\times \la{H}^{-1/2}(\Gamma)$.
Here the bilinear forms $a$, $b$ are defined as 
\[
\begin{aligned}
  a((\uf, \up), (\vf, \vp)) &= 2\mu  (\Epsilon(\uf), \Epsilon(\vf))_{\Omega_f} + 2 (\Epsilon(\up), \Epsilon(\vp))_{\Omega_p},\\
  b((\uf, \up), (q_f, q_p, \boldsymbol{w})) &= (q_f, \nabla\cdot\uf)_{\Omega_f} + (q_p, \nabla\cdot\up)_{\Omega_p} + (\boldsymbol{w}, k T\uf-T\up)_{\Gamma}.
\end{aligned}
\]
while
\[
f((\vf, \vp)) = \sum_{i=p, f}(\boldsymbol{f}_i, \boldsymbol{v}_i)_{\Omega_i} + (\boldsymbol{h}_i, \boldsymbol{v}_i)_{\partial\Omega_{i, N}}
\quad\mbox{ and }\quad
g((q_f, q_p, \boldsymbol{w})) = (\boldsymbol{g}, \boldsymbol{w}).
\]
We remark that the trace operators above act on vector fields.

Problem \eqref{eq:elast_stokes_weak} can be equivalently stated in terms of an
operator $\mathcal{A}:\la{W}\rightarrow \la{W}\dual$
\begin{equation}
\label{eq:elast_stokes_A}
\AA =    \left( \begin{array}{cc|ccc}
                  - 2\mu\nablab\cdot {\Epsilon} &  & -\nabla & & k T\dual \\ 
                              &-2\nablab\cdot {\Epsilon} &  &  -\nabla & -T\dual\\ \hline
                  \nabla \cdot&  &  &  \\ 
                              & \nabla \cdot  &  & -{\eta}^{-1} I \\ 
                  kT & -  T &  &   
     \end{array} \right).
\end{equation}
Observe that compared to the abstract problem
\eqref{eq:abstractcoupledprob} operator \eqref{eq:elast_stokes_A} has an
additional term on the diagonal, cf. $-{\eta}^{-1} I$, and the interface coupling
contains an explicit parameter. Considering the case where $\semi{\partial\Omega_{i, D}} > 0$
and $\Gamma\cap \partial\Omega_{i, D}=\emptyset$ we aim to show that the operator
\begin{equation}\label{eq:B_stokes_navier}
  \mathcal{B}=\begin{pmatrix}
-2\mu\nablab\cdot {\Epsilon} & & & &\\
&-2\nablab\cdot {\Epsilon} & & &\\
 & & \frac{1}{\mu}I & &\\
 & & & I & \\
& &  & &(\frac{k^2}{\mu}+1)(-{\Delta+I})^{-1/2}\\
  \end{pmatrix}^{-1}
\end{equation}
defines a parameter robust preconditioner for the Stokes-Navier system.
Note that the fractional operator is vector valued.

Due to the penalty term $-{\eta}^{-1} I$ in \eqref{eq:elast_stokes_A},
robustness of the preconditioner \eqref{eq:B_stokes_navier} 
does not follow directly from Theorem \ref{thm:abstractcoupledprob}. However, the abstract
framework will be used to show hypothesis \eqref{eq:penalty_brezzi} of the following result due to
\cite{braess1996stability}.

\begin{theorem}[\cite{braess1996stability}]\label{thm:penalty}
  Let $V, Q$ be Hilbert spaces and $A:V\rightarrow V\dual$, $B:V\rightarrow Q\dual$,
  $C:Q\rightarrow Q\dual$ be such that
  \begin{subequations}
    \begin{align}
      \label{eq:penalty_brezzi}
    &\begin{pmatrix}
      A & B\dual\\
      B &
     \end{pmatrix}\text{ satisfies the Brezzi conditions},\\
      \label{eq:penalty_C}
      & C \text{ is bounded positive semidefinite on $Q$},\\
      \label{eq:penalty_A}
      & A \text{ is positive semidefinite on $V$}.
  \end{align}
  \end{subequations}
  Then $\mathcal{A}=\begin{pmatrix}
      A & B\dual\\
      B & -t^2C
  \end{pmatrix}$ is an isomorphism $W\rightarrow W\dual$ and $\mathcal{A}^{-1}$ is
  uniformly bounded for $0\leq t \leq 1$.
\end{theorem}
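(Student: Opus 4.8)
The plan is to show that $\mathcal{A}=\mathcal{A}_t$ is a bounded isomorphism $W\to W\dual$, where $W=V\times Q$ carries the $t$-independent norm $\|(u,p)\|_W^2=\|u\|_V^2+\|p\|_Q^2$, with norm bounds for $\mathcal{A}$ and $\mathcal{A}^{-1}$ that do not depend on $t\in[0,1]$. Boundedness $\|\mathcal{A}_t\|_{\mathcal{L}(W,W\dual)}\le C$ is immediate from the boundedness of $A$, $B$ and $C$, using $t\le 1$. Everything else is the stability (inf--sup) estimate; since $A$ and $C$ are symmetric the form $\langle\mathcal{A}_t\cdot,\cdot\rangle$ is symmetric, so it suffices to produce, for each $(u,p)\in W$, a test pair $(v,q)\in W$ with $\|(v,q)\|_W\lesssim\|(u,p)\|_W$ and $\langle\mathcal{A}_t(u,p),(v,q)\rangle\gtrsim\|(u,p)\|_W^2$, with constants independent of $t$.

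First I would record injectivity, which already exposes the role of \eqref{eq:penalty_A} and \eqref{eq:penalty_C}. If $\mathcal{A}_t(u,p)=0$, pairing the first equation with $u$, the second with $p$, and subtracting gives $(Au,u)+t^2(Cp,p)=0$; by \eqref{eq:penalty_A} and \eqref{eq:penalty_C} both terms vanish, so $Au=0$ and, for $t>0$, $Cp=0$. Then $B\dual p=0$, and the inf--sup part of \eqref{eq:penalty_brezzi} forces $p=0$, hence $Bu=0$, so $u$ lies in $\ker B$, where $A$ is coercive, giving $u=0$. (An example with $\dim V=2$, $A$ indefinite on $V$ but coercive on $\ker B$, shows that for $t=1$ surjectivity may fail, so \eqref{eq:penalty_A} is genuinely needed.)

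For the uniform inf--sup I would take $(v,q)$ to be a weighted combination of four standard choices, after splitting $u=u_0+u_\perp$ with $u_0\in Z:=\ker B$ and $u_\perp\perp Z$: (i) the diagonal pair $(u,-p)$, whose contribution is the nonnegative quantity $(Au,u)+t^2(Cp,p)$ — the only place the sign of the perturbation enters, via \eqref{eq:penalty_A}, \eqref{eq:penalty_C}; (ii) a Fortin lift $w_p$ in the first slot with $(Bw_p,p)\gtrsim\|p\|_Q^2$, $\|w_p\|_V\lesssim\|p\|_Q$, supplied by the inf--sup condition, to recover control of $\|p\|_Q$; (iii) $q$ equal to the Riesz representative $r_u$ of $Bu\in Q\dual$, for which $(Bu,r_u)=\|Bu\|_{Q\dual}^2\gtrsim\|u_\perp\|_V^2$ by the closed-range/inf--sup condition; and (iv) $(u_0,0)$, whose contribution $(Au_0,u_0)\gtrsim\|u_0\|_V^2$ uses coercivity of $A$ on $Z$. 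Expanding $\langle\mathcal{A}_t(u,p),(v,q)\rangle$, all leftover terms are cross terms absorbed by boundedness of $A,B,C$ and Young's inequality, with every factor of $t$ bounded by $1$; choosing the four weights as a hierarchy of small parameters depending only on $\|A\|,\|B\|,\|C\|$ and the Brezzi constants (not on $t$) yields $\langle\mathcal{A}_t(u,p),(v,q)\rangle\gtrsim\|(u,p)\|_W^2$. By symmetry this single inf--sup gives well-posedness, and since the constant is $t$-free, $\mathcal{A}_t^{-1}$ is uniformly bounded on $[0,1]$.

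I expect the main obstacle to be that last step: $\|u_0\|_V$, $\|u_\perp\|_V$ and $\|p\|_Q$ are controlled cyclically (each at the cost of a cross term in the next), and the cross term linking $\|p\|_Q$ to $\|u_\perp\|_V$ carries a factor $\|C\|$ coming from $t^2(Cp,r_u)$; closing this loop with weights chosen so that nothing is reabsorbed with constant $\ge 1$ is the delicate part, and is what forces $\|\mathcal{A}_t^{-1}\|$ to depend on $\|C\|$. A softer alternative that sidesteps the bookkeeping: prove the estimate directly on a small interval $[0,\tau]$ by a Neumann series off the Brezzi-stable problem at $t=0$ (valid once $\tau^2\|C\|\,\|\mathcal{A}_0^{-1}\|<1$), invoke the standard theory of perturbed saddle-point problems for invertibility of $\mathcal{A}_t$ at each individual $t\in[\tau,1]$, and obtain uniformity on the compact interval $[\tau,1]$ from continuity of $t\mapsto\mathcal{A}_t^{-1}$ on the open set of invertible operators.
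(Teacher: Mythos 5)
First, note that the paper offers no proof of this theorem: it is quoted verbatim from Braess, so your proposal can only be measured against the standard argument there. Your skeleton matches it — the uniform inf--sup via a test pair assembled from the diagonal choice $(u,-p)$, a Fortin lift for $p$, the Riesz representative of $Bu$, and the kernel component $u_0$ is exactly the right construction, and your injectivity discussion correctly isolates why \eqref{eq:penalty_A} cannot be dropped. The gap is in the absorption step, and it is not where you locate it. The term $t^2(Cp,r_u)$ is harmless on its own, and so is $(Au,w_p)$ on its own; the problem is that handling \emph{both} by plain boundedness plus Young creates incompatible constraints on the weights. Bounding $t^2(Cp,r_u)$ by $\|C\|\,\|p\|_Q\,\|Bu\|_{Q\dual}$ forces $\delta_3\lesssim \delta_2\beta/\|C\|^2$ (the $\|p\|_Q^2$ debris must be paid for by piece (ii)), while bounding $(Au,w_p)$ by $\|A\|\,\|u\|_V\,\|w_p\|_V$ forces $\delta_2\lesssim \delta_3\beta^3/\|A\|^2$ (the $\|u\|_V^2$ debris must be paid for by pieces (iii)--(iv), whose weights sit below $\delta_3$). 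Together these demand $\beta^4\gtrsim\|A\|^2\|C\|^2$, a restriction on the data, not something a hierarchy of small parameters can arrange. The resolution — and the second, essential use of the semidefiniteness hypotheses — is the Cauchy--Schwarz inequality for the semi-inner products $(A\cdot,\cdot)$ and $(C\cdot,\cdot)$: e.g.
\begin{equation*}
\abs{(Au,w_p)}\le (Au,u)^{1/2}(Aw_p,w_p)^{1/2}\le \tfrac12 (Au,u)+\tfrac12\,\delta_2^{2}\,\|A\|\,c^2\,\|p\|_Q^2 ,
\end{equation*}
so that the $u$-dependence is dumped into the \emph{free nonnegative} term $(Au,u)$ supplied by the diagonal test pair rather than into the $\|u\|_V^2$ budget (equivalently, $\|Au\|_{V\dual}\le\|A\|^{1/2}(Au,u)^{1/2}$, which is how Braess phrases it). Doing this for at least one of the two cross terms breaks the cycle; "boundedness of $A,B,C$ and Young" alone does not.

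Your fallback route is not viable either: invertibility of $\mathcal{A}_t$ for each fixed $t\in[\tau,1]$ under exactly these hypotheses ($C$ merely semidefinite, $A$ coercive only on $\ker B$) \emph{is} the theorem; there is no independent "standard theory of perturbed saddle-point problems" to cite for fixed $t$ that does not already require the argument above (your own two-dimensional counterexample shows invertibility can fail at $t=1$ without \eqref{eq:penalty_A}), so the Neumann-series-plus-compactness step is circular.
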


In order to apply Theorem \ref{thm:abstractcoupledprob} to verify the Brezzi
conditions \eqref{eq:penalty_brezzi} the individual Stokes/Navier subproblems must 
satisfy \eqref{absbrezzi1}-\eqref{absbrezzi2} and the estimate \eqref{wellposednessdfn}. 
Here, we shall assume this result and later support it by numerical experiments similar 
to Assumptions \ref{assumption:stokesreg}, \ref{assumption:darcyreg}. 

\begin{assumption}[Navier subproblem]\label{assume:navier}
  Let $\Omega_f\subset\reals^2$ be a bounded domain with boundary decomposition
  $\partial\Omega_f=\Gamma\cup\partial\Omega_{f, D}\cup\partial\Omega_{p, N}$ where the
  components are assumed to be of non-zero measure and $\partial\Omega_{f, D}\cap\Gamma=\emptyset$. 
  We consider the problem
  \[
    \begin{aligned}
  -\nabla\cdot(\bsigma_f(\uf, p_f)) &= \ff&\mbox{ in }\Omega_f,\\
  \nabla\cdot\uf &= 0  &\mbox{ in }\Omega_f\cup\Gamma,\\
  \uf &= \boldsymbol{u}_f^0&\mbox{ on }\partial\Omega_{f, D},\\
  \nG\cdot\bsigma &= \boldsymbol{h}_f&\mbox{ pm }\partial\Omega_{f, N}.
    \end{aligned}
  \]

  Let $\mathbf{V}=\sqrt{\mu}\mathbf{H}^1_{0, D}(\Omega_f)$ $Q= \tfrac{1}{\sqrt{\mu}}L^2(\Omega_f)$, $\mathbf{\Lambda}=\tfrac{1}{\sqrt{\mu}} \la{H}^{-1/2}(\Gamma)$.
  For $\mathbf{W}=\mathbf{V}\times Q\times \mathbf{\Lambda}$ we define $a:\mathbf{W}\times \mathbf{W}\rightarrow\reals$, $L:\mathbf{W}\rightarrow\reals$ as
  
  \begin{equation}\label{eq:bab_navier_op}
    \begin{aligned}
      a((\uf, p_f, \boldsymbol{\lambda}), (\vf, q_f, \boldsymbol{w})) =& 2\mu (\Epsilon(\uf), \Epsilon(\vf)) + (p_f, \nabla \cdot \vf)  + (\nabla \cdot \uf, q_f)\\
      &+(\boldsymbol{\lambda}, T\vf)_{\Gamma} + (\boldsymbol{w}, T\uf)_{\Gamma},\\
L((\vp, q_p, \boldsymbol{w})) =& (\ff, \vf) + (\boldsymbol{v}_f, \boldsymbol{w})_{\Gamma}.
    \end{aligned}
\end{equation}  
  Then the problem: Find $(\uf, p_f, \boldsymbol{\lambda})\in \la{W}$ such that
  \[
  a((\up, p_p, \boldsymbol{\lambda}), (\vp, q_p, \boldsymbol{w})) = L((\vp, q_p, \boldsymbol{w})),\quad\forall(\vp, q_p, \boldsymbol{w})\in \mathbf{W}
  \]
  has a unique solution which satisfies 
  \begin{equation}\label{eq:navier_estimate}
  \|(\up, p_p, \boldsymbol{\lambda})\|_{\mathbf{W}} \leq C\left ( \|\ff\|_{\mathbf{V}\dual} +  \|\boldsymbol{g}\|_{\mathbf{\Lambda}\dual} \right )
  \end{equation}
  with $C$ independent of $\mu$.
\end{assumption}

We remark that \eqref{eq:bab_navier_op} differs from \eqref{eq:bab_stokes_op} by using the
full (vector) trace operator. The Navier problem with the normal trace operator, i.e. $\uf\cdot\nG$ 
enforced by Lagrange multiplier, which is a scalar in case $\Omega_f\subset\reals^2$, was shown to 
be well-posed in \cite{bertoluzza2017boundary}.

\begin{example}[Demonstration of Assumption \ref{assume:navier}]\label{ex:navier}
  Let $\Omega_f=\left[0, 1\right]^2$  with $\Gamma=\set{(x, y)\in\partial{\Omega_f}\,|\,x=0}$ and $\partial\Omega_{f, D}=\set{(x, y)\in\partial{\Omega_f}\,|\,x=1}$. We demonstrate that
      Assumption \ref{assume:navier} holds by considering the spectra of the
      preconditioned problem $\mathcal{A}x=\beta\mathcal{B}^{-1}x$ where $\mathcal{A}$
      is the operator due to the bilinear form in \eqref{eq:bab_navier_op} and $\mathcal{B}$
      is the Riesz map preconditioner induced by the space $\la{W}$, i.e.
\begin{equation}\label{eq:navier_example}
    \mathcal{A} = \begin{pmatrix}
        -2\mu\nablab\cdot {\Epsilon} & -\nabla & T\dual\\
        \nabla\cdot & &\\
        T & &\\
    \end{pmatrix},
    \,
\mathcal{B} = \begin{pmatrix}
      -2\mu\nablab\cdot\Epsilon &  &\\
    & \mu^{-1}I &\\
    & & \mu^{-1}{({-\Delta + I})}^{-1/2}
    \end{pmatrix}^{-1}.    
\end{equation}
As in this example newly the trace is a vector valued operator we define,
in addition to $\mathcal{B}$, the preconditioners $\mathcal{B}_{00}$
$\mathcal{B}_{n0}$, $\mathcal{B}_{t0}$. In $\mathcal{B}$ both the normal and
the tangential component of the multiplier are considered in $H^{-1/2}$. In
the remaining preconditioners both, respecively normal and tangential components
are assumed in $H^{-1/2}_{00}$

\begin{minipage}{0.45\textwidth}
      \centering
      \footnotesize{
      \begin{tabular}{c|ccc}
        \hline
            \multirow{2}{*}{$h$} & \multicolumn{3}{c}{$\mathcal{B}\mathcal{A}$}\\
        \cline{2-4}    
        & $\mu=1$ & $10^{-4}$ & $10^{-8}$\\
        \hline
$2^{-2}$ & 25.60 & 25.60 & 25.60 \\
$2^{-3}$ & 26.30 & 26.30 & 26.30 \\
$2^{-4}$ & 26.78 & 26.78 & 26.78 \\
$2^{-5}$ & 26.91 & 26.91 & 26.91 \\
$2^{-6}$ & 26.95 & 26.95 & 26.95 \\
        \hline
        \hline
        $h$ & {$\mathcal{B}_{00}\mathcal{A}$} & {$\mathcal{B}_{n0}\mathcal{A}$} & {$\mathcal{B}_{t0}\mathcal{A}$}\\
\hline        
$2^{-2}$ &39.04 &27.24 &38.36 \\
$2^{-3}$ &46.00 &30.93 &44.10 \\
$2^{-4}$ &51.40 &33.78 &48.77 \\
$2^{-5}$ &56.28 &36.52 &52.89 \\
$2^{-6}$ &60.97 &39.36 &56.81 \\        
\hline
      \end{tabular}
        }
      \captionof{table}{
        Spectral condition numbers of preconditioned problem \eqref{eq:bab_navier_op}. $\mathcal{B}$ is robust in $h$ and $\mu$.
        Results with the remaining preconditioners use $\mu=1$ and suggest that well-posedness
        requires both multiplier components in $H^{-1/2}$.
  }
  \label{tab:navier}  
    \end{minipage}
\begin{minipage}{0.49\textwidth}
    \begin{figure}[H]
      \centering
      \includegraphics[width=\textwidth]{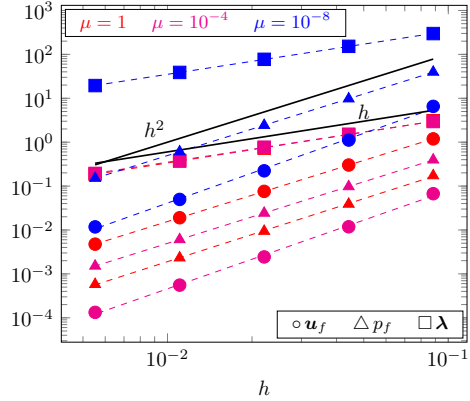}
      \vspace{-25pt}      
      \caption{
        Approximation errors of Navier problem \eqref{eq:bab_navier_op}
        measured in norm due to $\mathcal{B}^{-1}$. Discretization by \vecSelm elements.
      }
        \label{fig:navier}
    \end{figure}
\end{minipage}

  With preconditioner based on $\la{\Lambda}=\la{H}^{-1/2}$
  the condition numbers of \eqref{eq:navier_example} shown in Table \ref{tab:navier} appear bounded as the mesh is refined
  and are practically independent of $\mu$. With preconditioners based on $H^{-1/2}_{00}$
  for some of the multiplier components the results are unbounded.
  
  The estimate \eqref{eq:navier_estimate} is verified in Figure \ref{fig:navier}.
  We remark that \vecSelm\ elements were used in the example. Note also that the multiplier convergence
  is linear, cf. quadratic in Example \ref{ex:stokesreg}. However, in all the testcases
  the manufuctured Lagrange multiplier was a trigonometric function.
\end{example}

As in the case of Assumptions \ref{assumption:stokesreg} and \ref{assumption:darcyreg},
the numerical experiments in Example \ref{ex:navier} show that by using the norms of Assumption
\ref{assume:navier}, the preconditioned system has a condition number bounded in both discretization and material parameters, supporting Assumption
\ref{assume:navier}. We remark that the quadratic convergence observed in Figure \ref{fig:navier}
is an agreemenent with the analysis of \cite{bertoluzza2017boundary} where \eqref{eq:bab_navier_op}
was studied with the normal trace operator.


\begin{theorem} \label{thm:SEwellposedness}
  Let $\Omega_f$, $\Omega_p$ be such that $\semi{\partial\Omega_{i, D}} > 0$
  and $\partial\Omega_{i, D}\cap\Gamma=\emptyset$, $i=p, f$. Let
  \[
  \la{W}=
  \sqrt{\mu}\mathbf{H}^1_{0, D}(\Omega_f)\times
  \mathbf{H}^1_{0, D}(\Omega_p)\times
  \tfrac{1}{\sqrt{\mu}}L^2{(\Omega_f)}\times
  L^2(\Omega_p)\times 
  \sqrt{1+\frac{k^2}{\mu}} \mathbf{H}^{-1/2}(\Gamma).
  \]
Then if Assumption \ref{assume:navier} holds,
the operator $\AA$ in \eqref{eq:elast_stokes_A} is an isomorphism mapping 
$\la{W}$ to $\la{W}\dual$ such that 
$\|\AA\|_{\mathcal{L}(\la{W},\la{W}\dual)} \le C$ and 
$\|\AA^{-1}\|_{\mathcal{L}(\la{W}',\la{W})} \le \frac 1C$ 
where $C$ is independent of $\mu$, $k$, and $\eta$.
\end{theorem}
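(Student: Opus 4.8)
The plan is to recast $\AA$ from \eqref{eq:elast_stokes_A} in the penalized saddle-point form of Theorem \ref{thm:penalty} and verify its three hypotheses \eqref{eq:penalty_brezzi}, \eqref{eq:penalty_C}, \eqref{eq:penalty_A}, obtaining the first from the abstract coupled-problem result, Theorem \ref{thm:abstractcoupledprob}. Set $\mathbf{V}=\sqrt{\mu}\mathbf{H}^1_{0,D}(\Omega_f)\times\mathbf{H}^1_{0,D}(\Omega_p)$ and $Q=\tfrac{1}{\sqrt{\mu}}L^2(\Omega_f)\times L^2(\Omega_p)\times\sqrt{1+k^2/\mu}\,\mathbf{H}^{-1/2}(\Gamma)$, so that $\la{W}=\mathbf{V}\times Q$ is the space in the statement. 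Let $A=\diag(-2\mu\nablab\cdot\Epsilon,\,-2\nablab\cdot\Epsilon)$ on $\mathbf{V}$, let $B:\mathbf{V}\to Q\dual$ collect the two divergence operators together with the interface coupling $(\vf,\vp)\mapsto kT\vf-T\vp$, and let $C:Q\to Q\dual$ be induced by $(p_p,q_p)_{\Omega_p}$ on the $L^2(\Omega_p)$-slot of $Q$ and zero elsewhere. Then $\AA=\begin{pmatrix} A & B\dual \\ B & -\eta^{-1}C\end{pmatrix}$ with $t^2=\eta^{-1}\in(0,1]$, using the standing assumption $\eta\ge 1$.

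Two of the hypotheses are immediate and involve no parameters. For \eqref{eq:penalty_A}, $(A(\uf,\up),(\uf,\up))=2\mu\|\Epsilon(\uf)\|^2_{L^2(\Omega_f)}+2\|\Epsilon(\up)\|^2_{L^2(\Omega_p)}\ge 0$. For \eqref{eq:penalty_C}, $(C(q_f,q_p,w),(q_f,q_p,w))=\|q_p\|^2_{L^2(\Omega_p)}\ge 0$, and since the $L^2(\Omega_p)$-factor of $Q$ carries the unit-weight norm (as $\nu=1$), this is bounded by $\|(q_f,q_p,w)\|^2_Q$, so $C$ is bounded with a parameter-independent constant. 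The substance is therefore \eqref{eq:penalty_brezzi}: the penalty-free operator $\begin{pmatrix} A & B\dual \\ B & \end{pmatrix}$ must satisfy the Brezzi conditions on $\mathbf{V}\times Q$ with constants independent of $\mu$, $k$ and $\eta$.

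To establish \eqref{eq:penalty_brezzi} I would apply Theorem \ref{thm:abstractcoupledprob} with subproblem $1$ the Stokes problem on $\Omega_f$ (viscosity $\mu$, interface trace $kT$) and subproblem $2$ the mixed incompressible elasticity problem on $\Omega_p$ (the penalty-free, $\eta^{-1}=0$ version of $L_p$ with $\nu=1$, interface trace $-T$, the sign being immaterial). Both fall under Assumption \ref{assume:navier}: it supplies the Brezzi conditions \eqref{absbrezzi1}--\eqref{absbrezzi2} and the stability bound \eqref{wellposednessdfn}, once the factor $k$ in the fluid coupling is absorbed into the multiplier. Rescaling $\lambda\mapsto k\lambda$ shows the fluid subproblem is well posed in $\sqrt{\mu}\mathbf{H}^1_{0,D}(\Omega_f)\times\tfrac{1}{\sqrt{\mu}}L^2(\Omega_f)\times\tfrac{k}{\sqrt{\mu}}\mathbf{H}^{-1/2}(\Gamma)$ and the solid subproblem in $\mathbf{H}^1_{0,D}(\Omega_p)\times L^2(\Omega_p)\times\mathbf{H}^{-1/2}(\Gamma)$; the intersection of the two multiplier spaces is exactly $\sqrt{1+k^2/\mu}\,\mathbf{H}^{-1/2}(\Gamma)$, which fixes the weight in $\la{W}$. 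It remains to verify the strengthened coercivity \eqref{absbrezzi1kerB} for each subproblem. Mirroring the proof of Theorem \ref{thm:DSwellposedness}, but with Korn's inequality in place of Poincar\'e's (legitimate since $\semi{\partial\Omega_{i,D}}>0$), there is a constant $C_K>0$ depending only on the domain with $2\mu\|\Epsilon(\uf)\|^2_{L^2(\Omega_f)}\ge 2C_K\mu\|\uf\|^2_{\mathbf{H}^1(\Omega_f)}=2C_K\|\uf\|^2_{\sqrt{\mu}\mathbf{H}^1(\Omega_f)}$ for all $\uf\in\mathbf{H}^1_{0,D}(\Omega_f)$, and likewise for the solid with $\nu=1$, so the coercivity constants are parameter independent. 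With \eqref{absbrezzi1}--\eqref{absbrezzi4} and \eqref{absbrezzi1kerB} in hand Theorem \ref{thm:abstractcoupledprob} applies, and since its proof is precisely a verification of the Brezzi conditions for the $2\times 2$ block form on $\mathbf{V}_1\times\mathbf{V}_2$ and $Q_1\times Q_2\times(\Lambda_1\cap\Lambda_2)$, hypothesis \eqref{eq:penalty_brezzi} follows with parameter-independent constants. Theorem \ref{thm:penalty} with $t=\eta^{-1/2}\le 1$ then gives that $\AA:\la{W}\to\la{W}\dual$ is an isomorphism with $\|\AA^{-1}\|_{\mathcal{L}(\la{W}\dual,\la{W})}$ bounded uniformly, and boundedness of $\AA$ itself follows from the boundedness of $A$, $B$ and $C$.

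I expect the parameter bookkeeping at the interface to be the delicate point: tracking the coupling coefficient $k$ through the fluid trace and intersecting $\tfrac{k}{\sqrt{\mu}}\mathbf{H}^{-1/2}(\Gamma)$ with $\mathbf{H}^{-1/2}(\Gamma)$ is what produces the weight $\sqrt{1+k^2/\mu}$, and misplacing it would destroy robustness. A second, minor, restriction is that Theorem \ref{thm:penalty} covers only $0\le t\le 1$, which confines the argument to the nearly-incompressible regime $\eta\ge 1$ of interest; an extension to $\eta<1$ would need a different scaling of the $L^2(\Omega_p)$-pressure norm. Apart from these, and the tacit use of Assumption \ref{assume:navier} also for the solid subproblem (its hypotheses being symmetric in $f$ and $p$), the steps are routine.
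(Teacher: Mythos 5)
Your proposal is correct and follows essentially the same route as the paper: verify hypothesis \eqref{eq:penalty_brezzi} via Theorem \ref{thm:abstractcoupledprob} applied to the two subproblems under Assumption \ref{assume:navier}, check the semi-definiteness of the penalty block, establish coercivity of $A$ by Korn's inequality (with $\semi{\partial\Omega_{i,D}}>0$), and conclude with Theorem \ref{thm:penalty}. You are in fact more explicit than the paper about the bookkeeping of the coupling coefficient $k$ and the resulting intersection weight $\sqrt{1+k^2/\mu}$ on the multiplier, and about the tacit use of the assumption for the solid subproblem; the paper compresses all of this into the single sentence that Theorem \ref{thm:abstractcoupledprob} verifies \eqref{eq:penalty_brezzi}.
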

\begin{proof}
  Assuming Assumption \ref{assume:navier} holds, Theorem \ref{thm:abstractcoupledprob}
  verifies the condition \eqref{eq:penalty_brezzi}. Since $C=\diag(0, I, 0)$ in \eqref{eq:elast_stokes_A}
  boundedness and semi-definiteness of $C$ in \eqref{eq:penalty_C} are satisfied. It remains
  to verify the coercivity condition \eqref{eq:penalty_A} for $A=\diag(2\mu\nablab\cdot {\Epsilon}, 2\nablab\cdot {\Epsilon})$.
  Using Korn's inequality on both subdomains $i=f, p$, there exist $C_i>0$ such
  that $2\|\Epsilon(\boldsymbol{u}_i)\|^2_{\la{L}^2(\Omega_i)} \geq C_i \|\nabla \boldsymbol{u}_i\|^2_{\la{L}^2(\Omega_i)}$.
  Then
  \[
  \begin{aligned}
    \left (A(\uf, \up), (\uf, \up)\right ) &= 2 \mu\|\Epsilon(\uf)\|^2_{\la{L}^2(\Omega_f)} + 2 \|\Epsilon(\up)\|^2_{\la{L}^2(\Omega_p)}\\
    & \geq \min{\left(C_p, C_f\right)}\left(\mu \|\nabla \up\|^2_{\la{L}^2(\Omega_p)}+\|\up\|^2_{\la{L}^2(\Omega_p)}\right)\\
    & \geq C \left ( \mu \|\uf\|^2_{\la{H}^1_{0, D}(\Omega_f)} + \|\up\|^2_{\la{H}^1_{0, D}(\Omega_p)} \right ),
  \end{aligned}
  \]
  where the Poincar{\'e} inequality was
  used in the final step, cf. $\semi{\partial\Omega_{i, D}} > 0$. All asumptions of Theorem \ref{thm:penalty} are thus met.
\end{proof}

\begin{example}[Robust Stokes-Navier preconditioner]\label{ex:stokes_navier}
  We consider \eqref{eq:elast_stokes_weak} with the geometrical setup of Darcy-Stokes 
  Example \ref{ex:darcy_right}, see also Figure \ref{fig:DSdomains}. Using preconditioner 
  \eqref{eq:B_stokes_navier} and discretization in terms of \SNelm\ elements Figure 
  \ref{fig:NS_NN} shows the MinRes iterations counts and condition numbers. 
  Compared to the Darcy-Stokes problem the spread of the quantities is larger, however,
  both are bounded. Observe in particular that with the remaining parameters fixed
  the condition number is bounded in the 
  time stepping parameter $k$.

  \begin{figure}
    \centering
  \includegraphics[height=0.5\textwidth]{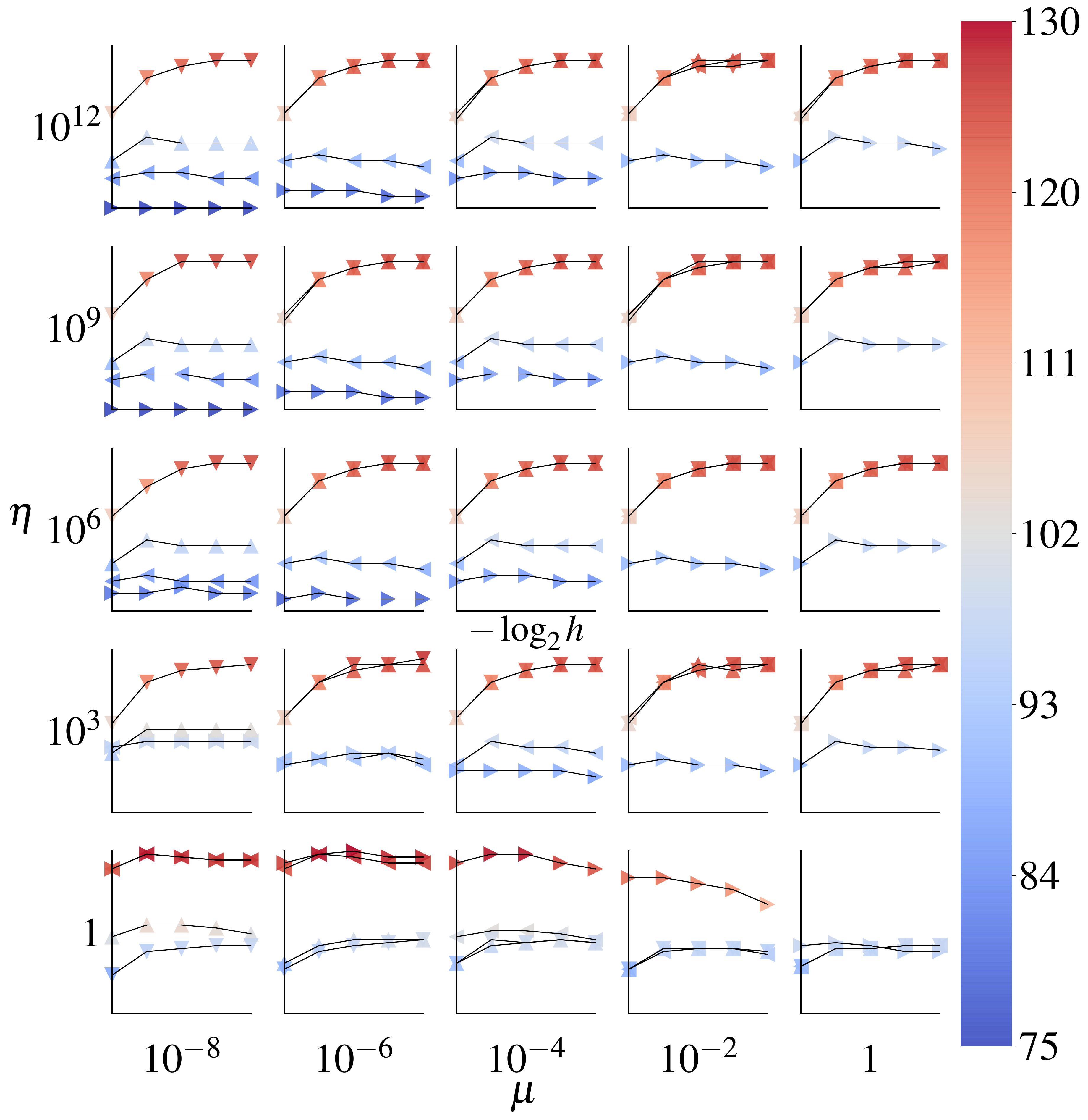}
  \includegraphics[height=0.5\textwidth]{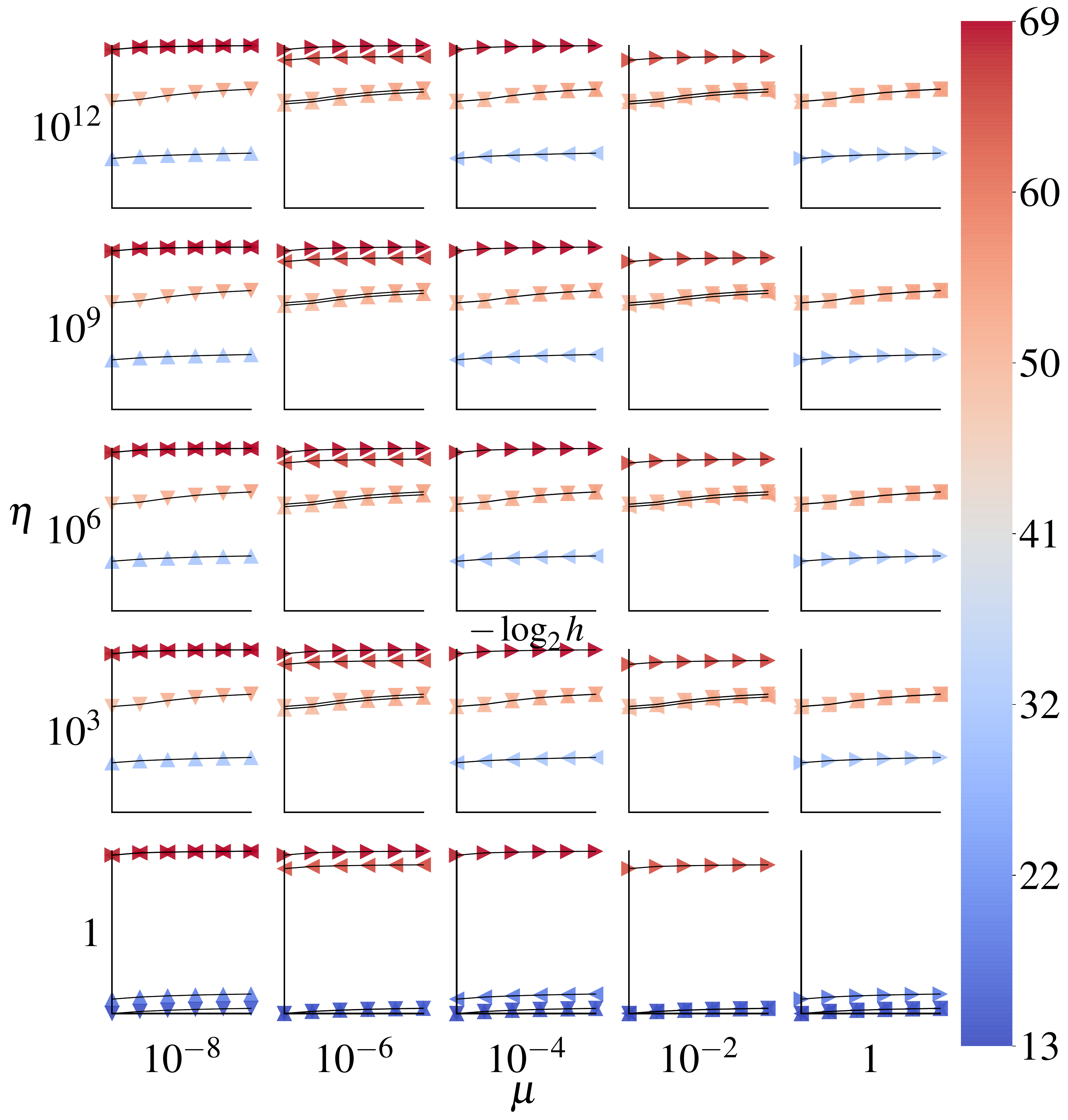}
    \vspace{-15pt}  
   \caption{
     Stokes-Navier problem with $\Gamma$ intersecting Neumann boundaries and $\semi{\partial\Omega_{i, D}} > 0$.
     Preconditioner \eqref{eq:B_stokes_navier} is used.
 (Left) Number of preconditioned MinRes iterations.
 (Right) Spectral condition number. 
 For fixed $\eta$, $\mu$ subplots the horizontal axis is scaled as $-log_2 h$ so that the system size grows from left to right. The coarsest mesh for left plot has $h=2^{-3}$ while $h=2^{-1}$ in the right plot. Values of $k=10^{-6}, 10^{-4}, 10^{-2}, 1$ 
    are encoded with markers $\triangledown$, $\triangle$, $\triangleleft$, $\triangleright$.
   }
  \label{fig:NS_NN}
 \end{figure}
\end{example}

We finally address the Stokes-Navier system equipped with homogeneous
Dirichlet conditions. In contrast to the Darcy-Stokes problem in Remark
\ref{rmrk:DS_DD}, the operator \eqref{eq:elast_stokes_A} in this case will not
be singular for $\eta < \infty$. However, the challenge  comes from the fact that the problem becomes 
singular, with a one-dimensional kernel,  in the incompressible limit and as such there is one vector that
is problematic. Our observations are summarized in Remark \ref{rmrk:SN_DD}. 

\begin{remark}[Homogeneous Dirichlet conditions]\label{rmrk:SN_DD} Let
  $\semi{\partial\Omega_{i, N}}=0$, $i=p, f$ in \eqref{eq:elast_stokes_weak}.
  As $\Gamma\cap\partial\Omega_{i, D}\neq\emptyset$ let, cf. Example \ref{ex:poisson_bcs}, 
  \begin{equation}\label{eq:SN_DD_space}
  \la{W}=
  \sqrt{\mu}\mathbf{H}^1_{0, D}(\Omega_f)\times
  \mathbf{H}^1_{0, D}(\Omega_p)\times
  \tfrac{1}{\sqrt{\mu}}L^2{(\Omega_f)}\times
  L^2(\Omega_p)\times 
  \sqrt{1+\frac{k^2}{\mu}} \mathbf{H}_{00}^{-1/2}(\Gamma).
  \end{equation}
  Considering $\mathcal{A}$ in \eqref{eq:elast_stokes_A} on $\la{W}$ the operator
  is non-singular, however, in the limit $\eta=\infty$, the vector $z=(\la{0}, \la{0}, k, 1, -\la{n})$
  forms the nullspace of $\mathcal{A}$. In turn the Brezzi
  conditions \eqref{eq:penalty_brezzi} of Theorem \ref{thm:penalty} do not hold independently of $\eta$
  on $\la{W}$.

  Using the Riesz map preconditioner based on $\la{W}$ we 
  illustrate below the  
  the sensitivity of the condition numbers to variations in $\eta$. Here the remaining
  parameters are fixed at 1. However, the lack of $\eta$-robustness was observed also if $0<\mu<1$ and $0<k<1$.
\begin{center}
  \footnotesize{
  \begin{tabular}{c|llll}
    \hline
    \multirow{2}{*}{$\eta$} & \multicolumn{4}{c}{$h$}\\
    \cline{2-5}
    & $2^{-1}$ & $2^{-2}$ & $2^{-3}$ & $2^{-4}$\\
\hline
1 & 26&	    27&	        28&	     28         \\
$10^{3}$ & 5893&	    5962&	5924&	     5869       \\
$10^{6}$ & 5881117&	    5950418&	5912190&	     5858016    \\
\hline
\end{tabular}
  }
\end{center}

Let next $\la{W}^{\perp}=\set{w\in\la{W}\,|\,(w, z)=0}$. This choice is motivated
by \eqref{eq:penalty_brezzi} where the inf-sup condition was violated by $z$. In addition,
the solution algorithm for \eqref{eq:elast_stokes_weak} on $\la{W}$ could be designed
following the idea of the Sherman-Morrison-Woodbury formula, that is, by considering
$\mathcal{A}$ on $\la{W}$ as a rank-one perturbation of $\mathcal{A}$ on $\la{W}^{\perp}$
where the latter can be analyzed by Theorem \ref{thm:penalty}.

Using the Riesz map preconditioner based on \eqref{eq:SN_DD_space} Figure
\ref{fig:SN_DD} shows\footnote{The discrete eigenvalue problems were restricted
  to the appropriate subspace by passing to the iterative Krylov-Schur solver
  the interpolant of $z$. 
}
the condition numbers of the preconditioned problem $\mathcal{A}x=\beta\mathcal{B}^{-1}x$ with $x\in\la{W}^{\perp}$.
It can be seen that the values are bounded in all the parameter variations. Robustness of the results then supports the claim that the Brezzi conditions
\eqref{eq:penalty_brezzi} are satisfied on the $\la{W}^{\perp}$. However, we do not prove the claim here.

  \begin{figure}
    \centering
  \includegraphics[height=0.65\textwidth]{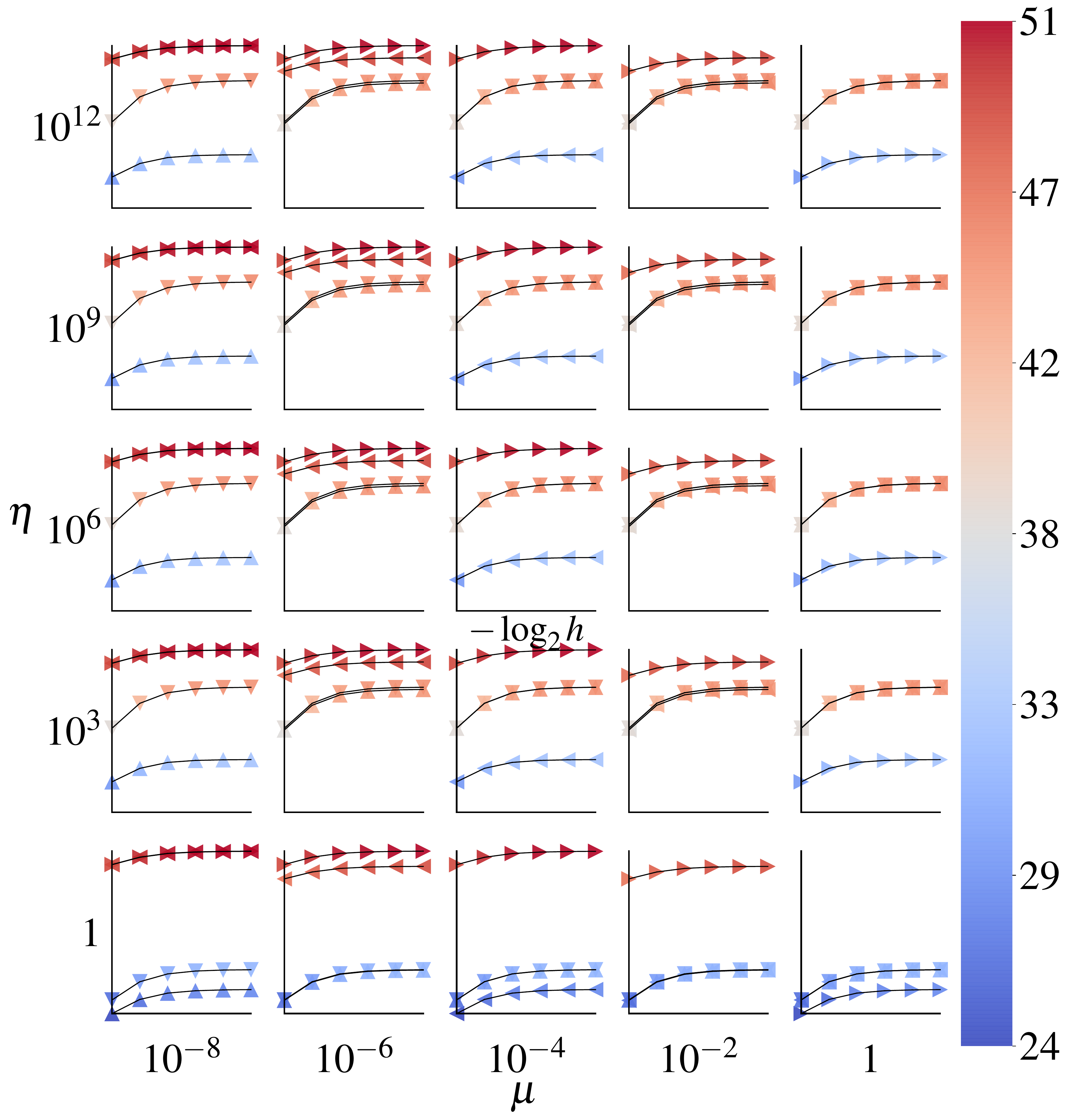}
    \vspace{-15pt}  
   \caption{
     Conditioning of Stokes-Navier problem with homogeneous Dirichlet boundary conditions and preconditioner based on 
     \eqref{eq:SN_DD_space}. Eigenvalue problem is considered 
     on the subspace $\la{W}^\perp$, see Remark \ref{rmrk:SN_DD}.
    For fixed $\mu$, $\eta$ the system size grows from left 
    to right. Values of $k=10^{-6}, 10^{-4}, 10^{-2}, 1$ 
    are encoded with markers $\triangledown$, $\triangle$, $\triangleleft$, $\triangleright$.
   }
  \label{fig:SN_DD}
 \end{figure}
\end{remark}

 Based on the observed bounded spectrum in Remark \ref{rmrk:SN_DD}, iterative solvers for the Stokes-Navier problem with Dirichlet boundary conditions shall be pursued in the future work.

\appendix
\section{Solution times} To allow for comparison of our monolithic approach
with other solution techniques, in particular DD methods,
we list below the solution times of Darcy-Stokes (Example \ref{ex:darcy_right}) and Stokes-Navier (Example \ref{ex:stokes_navier}) problems
preconditioned respectively with \eqref{eq:B_darcy_stokes} and \eqref{eq:B_stokes_navier}
and the timings of subproblems from Examples \ref{ex:stokesreg}, \ref{ex:darcyreg}, \ref{ex:navier}.
With the DD algorithm in mind we also consider the subproblems where
the Dirichlet boundary conditions are enforced by construction of the function space,
i.e. without the Lagrange multiplier.

The experiments are conducted with the setup according to Remark \ref{rmrk:setup} with
all the material parameters set to unity. In particular,
the preconditioners use LU and thus the results present an idealized scenario.
Further, the subproblems are considered on half domain, i.e. $\Omega_f=\left[0, \frac{1}{2}\right]\times\left[0, 1\right]$.
Thus dimension of the discrete space, $\dim \la{W}^h$, in the
coupled problems can be inferred from the corresponding dimensions shown in Table \ref{tab:dims}. 
We remark that the preconditioners for Darcy, Stokes and
Navier subproblems with the standard Dirichlet boundary conditions are defined as
Riesz mappings for $\la{H}_{0, D}(\operatorname{div}, \Omega_f)\times L^2(\Omega_f)$,
$\la{H}_{0, D}^1(\Omega_f)\cap\la{L}^2(\Gamma) \times L^2(\Omega_f)$ and
$\la{H}_{0, D}^1(\Omega_f) \times L^2(\Omega_f)$ respectively.

\begin{table}[h]
\footnotesize{
  \begin{center}
    \begin{tabular}{c|cc|cc|c}
      \hline
    $h$ & $\dim \la{V}^h_{f}$ & $\dim Q^h_{f}$ & $\dim \la{V}^h_{p}$ &$\dim Q^h_{p}$ & $\dim \Lambda^h$\\
      \hline
$2^{-3}$& 1122&    153&    408&      256&    16\\
$2^{-4}$& 4290&    561&    1584&     1024&   32\\
$2^{-5}$& 16770&   2145&   6240&     4096&   64\\
$2^{-6}$& 66306&   8385&   24768&    16384&  128\\
$2^{-7}$& 263682&  33153&  98688&    65536&  256\\
$2^{-8}$& 1051650& 131841& 393984&   262144& 512\\
  \hline
  \end{tabular}
\end{center}  
}
\caption{Dimensions of \DSelm\ finite element spaces used in solver comparison summarized in Table \ref{tab:cpu}.}
\label{tab:dims}
\end{table}

\begin{table}[h]
\footnotesize{
  \begin{center}
    \begin{tabular}{c|c|cc|cc||c|cc}
      \hline
    \multirow{2}{*}{$h$} & \multicolumn{5}{c||}{Darcy-Stokes} & \multicolumn{3}{c}{Stokes-Navier}\\
    \cline{2-9}
    & \eqref{eq:darcy_stokes_weak} &
    \eqref{eq:bab_darcy_op} & \eqref{eq:bab_darcy_op}\textsuperscript{*} &
      \eqref{eq:bab_stokes_op} & \eqref{eq:bab_stokes_op}\textsuperscript{*} &    
      \eqref{eq:elast_stokes_weak} & \eqref{eq:bab_navier_op} & \eqref{eq:bab_navier_op}\textsuperscript{*}\\
      \hline
$2^{-3}$& 0.08&  0.02& $<0.01$& 0.06&  0.03&  0.21&   0.13&  0.04\\ 
$2^{-4}$& 0.16&  0.03& 0.01&    0.12&  0.08&  0.40&   0.24&  0.08\\ 
$2^{-5}$& 1.09&  0.10& 0.02&    0.87&  0.61&  3.05&   1.80&  0.65\\ 
$2^{-6}$& 3.83&  0.57& 0.15&    3.45&  2.62&  10.96&  6.26&  2.54\\ 
$2^{-7}$& 15.15& 2.24& 0.65&    12.67& 9.24&  36.78&  23.54& 9.51\\ 
$2^{-8}$& 44.40& 8.66& 2.55&    39.33& 29.11& 115.62& 74.04& 29.89\\
\hline
\hline      
iter & 50 & 28 & 8 & 61 & 45 & 95 & 116 & 47\\
cond & 6.63 & 3.54 & 1.10 & 21.56 & 6.99 & 20.16 & 55.30 & 12.72\\
  \hline
  \end{tabular}
\end{center}  
}
\caption{Timings of MinRes solver (in seconds, excluding preconditioner setup).
  Asterisk indicates that subproblem does not use Lagrange multiplier and has \emph{all} Dirichlet boundary conditions 
  enforced strongly. Final row shows iteration count till convergence and the condition
  numbers of the preconditioned problems on mesh $h=2^{-8}$.
}
\label{tab:cpu}
\end{table}

In Table \ref{tab:cpu} we report solution times of the MinRes solver running
on a single core of Intel i7 4790S @3.20GHz CPU and with 32GB of memory. Considering
the timings obtained on the finest mesh, it can be seen that for a Darcy-Stokes
problem the cost of a single DD iteration is cca. 47s
if the subproblems are setup using \eqref{eq:bab_darcy_op} and \eqref{eq:bab_stokes_op}.
The cost reduces to cca. 31s if standard Dirichlet conditions are used. For Stokes-Navier
problem the DD iteration take 60 and 150 seconds respectively. The monolithic solution
algorithm presented here thus compares favourably with the domain decomposition approach.
In particular, for similar performance rapid convergence of the (naive) DD in about 2 iterations
is required.

We remark that the condition numbers of the subproblems reported in Table \ref{tab:cpu}
concern $\Omega_f=\left[0, \frac{1}{2}\right]\times\left[0, 1\right]$, while in 
Table \ref{tab:darcyreg}, \ref{tab:stokesreg} and \ref{tab:navier} domain 
$\Omega_f=\left[0, 1\right]^2$ is used. 

\section{Approximation errors} Error convergence of the solutions of the coupled problems 
with unit parameters computed by the monolithic solvers is shown in Figure 
\ref{fig:coupled_cvrg}. We recall that \DSelm\ and \SNelm\ elements were used. 
Convergence rates of the coupled Darcy-Stokes problem are in agreement with 
the estimates established in \cite{galvis2007non}.

\begin{figure}[h]
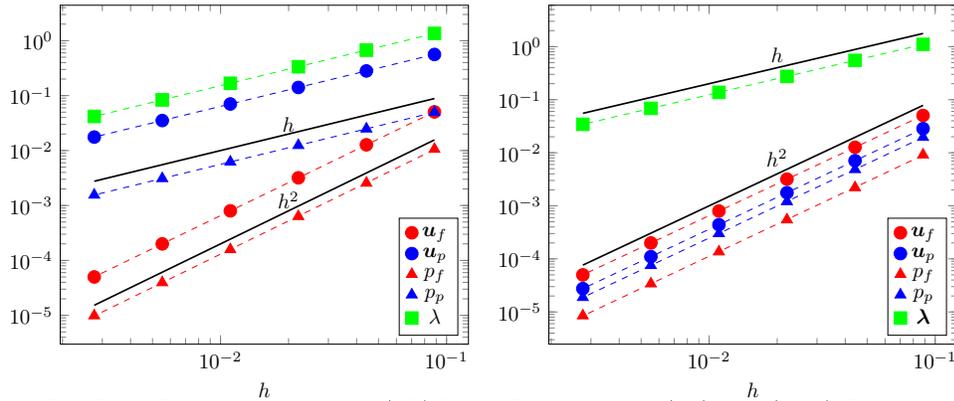

\begin{center}  
\includegraphics[width=0.49\textwidth]{./img/inline_cvrg_darcy_stokes.tex}
\includegraphics[width=0.49\textwidth]{./img/inline_cvrg_stokes_navier.tex}
\end{center}
\vspace{-15pt}
\caption{
  Error convergence for (left) Darcy-Stokes problem \eqref{eq:darcy_stokes_weak}
  and (right) Stokes-Navier problem \eqref{eq:elast_stokes_weak} in the norms
  induced by \eqref{eq:B_darcy_stokes} and \eqref{eq:B_stokes_navier} respectively.
}
\label{fig:coupled_cvrg}
\end{figure}


\bibliographystyle{siamplain}
\bibliography{multiphysics}

\end{document}